\newtheorem{thm}{Theorem}[section]
\newtheorem{prop}[thm]{Proposition}
\newtheorem{lem}[thm]{Lemma}
\newtheorem{cor}[thm]{Corollary}
\theoremstyle{definition}
\newtheorem{defn}[thm]{Definition}
\newtheorem{ex}[thm]{Example}
\theoremstyle{remark}
\newtheorem{rem}[thm]{Remark}
\renewcommand{\d}{\mathrm d}                           
\newcommand{\set}[1]{\left\{#1\right\}}                
\newcommand{\eval}[1]{\left\langle#1\right\rangle}     
\newcommand{\br}{\left[\, , \, \right]}                
\newcommand{\brr}[1]{\left[#1\right]}                  
\newcommand{\cbrr}[1]{\left[\! \left[#1\right]\!\right]}
\newcommand{\NN}{\ensuremath{\mathcal{N}}}
\newcommand{\ds}{\displaystyle}                         
\newcommand{\C}{\ensuremath{\mathcal{C}}}  
\newcommand{\T}{{\mathcal{T}}}             
\renewcommand{\d}{\mathrm d}               
\newcommand{\Lie}{\boldsymbol{\pounds}}            
\newcommand{\smc}{\mbox{\,\tiny{$\circ $}\,}}         
\DeclareMathOperator{\graf}{graph}          
\newcommand{\al}{\alpha}
\newcommand{\be}{\beta}
\begin{document}

\title{On Poisson quasi-Nijenhuis Lie algebroids}
\author{Raquel Caseiro, Antonio De Nicola and Joana M. Nunes da Costa}
\address{CMUC, Department of Mathematics, University of Coimbra}
\email{raquel@mat.uc.pt, antondenicola@gmail.com, jmcosta@mat.uc.pt}

\date{June 13, 2008}

\begin{abstract}
We propose a definition of Poisson quasi-Nijenhuis Lie algebroids as
a natural generalization of Poisson quasi-Nijenhuis manifolds and
show that any such Lie algebroid has an associated quasi-Lie
bialgebroid. Therefore, also an associated Courant algebroid is
obtained. We introduce the notion of a morphism of quasi-Lie
bialgebroids and of the induced Courant algebroids morphism and
provide some examples of Courant algebroid morphisms. Finally, we
use paired operators to deform doubles of Lie and quasi-Lie
bialgebroids and find an application to generalized complex
geometry.

\end{abstract}
\maketitle

\section*{Introduction}             %
\label{sec:introduction}           %
The notion of Poisson quasi-Nijenhuis manifold was recently
introduced by Sti\'enon and Xu \cite{StiXu}. It is a manifold $M$
together with a Poisson bivector field $\pi$, a $(1,1)$-tensor $N$
compatible with $\pi$ and a closed $3$-form $\phi$ such that $i_N
\phi$ is also closed and the Nijenhuis torsion of $N$, which is
nonzero, is expressed by means of $\phi$ and $\pi$. When $\phi=0$
one obtains a Poisson-Nijenhuis manifold, a concept introduced by
Magri and Morosi \cite{MagriMorosi} to study integrable systems and
which was extended to the Lie algebroid framework by
Kosmann-Schwarzbach \cite{YKS} and Grabowski and Urbanski
\cite{GraUrb} who introduced the notion of a Poisson-Nijenhuis Lie
algebroid. In this paper we propose a definition of Poisson
quasi-Nijenhuis Lie algebroid, which is a straightforward
generalization of a Poisson quasi-Nijenhuis manifold.

Quasi-Lie bialgebroids were introduced by Roytenberg \cite{Roy} who
showed that they are the natural framework to study twisted Poisson
structures \cite{SevWei}. On the other hand, quasi-Lie bialgebroids
are intimately related to Courant algebroids \cite{LiuWeiXu},
because the double of a quasi-Lie bialgebroid carries a structure of
Courant algebroid and conversely, a Courant algebroid $E$ that
admits a Dirac subbundle $A$ and a transversal isotropic complement
$B$, can be identified with the Whitney sum $A \oplus A^*$, where
$A^*$ is identified with $B$ \cite{Roy}. Generalizing a result of
Kosmann-Schwarzbach \cite{YKS} for Poisson-Nijenhuis manifolds and
Lie bialgebroids, it is proved in \cite{StiXu} that a Poisson
quasi-Nijenhuis structure on a manifold $M$ is equivalent to a
quasi-Lie bialgebroid structure on $T^*M$. Extending the result of
\cite{StiXu}, we show that a Poisson quasi-Nijenhuis Lie algebroid
has an associated quasi-Lie bialgebroid, so that it has also an
associated Courant algebroid.

In an unpublished manuscript, Alekseev and Xu \cite{AleXu}, gave
the definition of a Courant algebroid morphism between $E_1$ and
$E_2$ and, in the case where $E_1$ and $E_2$ are doubles of Lie
bialgebroids $(A,A^*)$ and $(B,B^*)$, i.e $E_1=A \oplus A^*$ and
$E_2=B \oplus B^*$, they established a relationship with a Lie
bialgebroid morphism $A \to B$ \cite{mck2005}. Since doubles of
quasi-Lie bialgebroids are Courant algebroids, it seems natural to
obtain a relationship between Courant algebroid morphisms and
quasi-Lie bialgebroid morphisms. This is the case when considering
 Courant algebroids associated with a Poisson quasi-Nijenhuis Lie
algebroid of a certain type and with a twisted Poisson Lie
algebroid, respectively. In a first step towards our result, we
give the definition of a morphism of quasi-Lie bialgebroids which
is, up to our knowledge, a new concept that includes morphism of
Lie bialgebroids as a particular case.

Another aspect of Poisson quasi-Nijenhuis manifolds that is
exploited in \cite{StiXu} is the relation with generalized complex
structures. We extend to Poisson quasi-Nijenhuis Lie algebroids
some of the results obtained in \cite{StiXu} and also discuss the
relation of Poisson quasi-Nijenhuis Lie algebroids with paired
operators \cite{CarGraMar}.

The paper is divided into three sections. In section 1 we introduce
quasi-Lie bialgebroid morphisms and discuss their relationship with
Courant algebroid morphisms. Section 2 is devoted to Poisson
quasi-Nijenhuis Lie algebroids. We prove that each Poisson
quasi-Nijenhuis Lie algebroid has an associated quasi-Lie
bialgebroid and, in some particular cases, we construct a morphism
of Courant algebroids. In the last section we use paired operators
to deform doubles of Lie and quasi-Lie bialgebroids.

\section{Quasi-Lie bialgebroids morphisms}             %
\label{sec:Quasi-Lie bialgebroids}           %
\subsection{Quasi-Lie bialgebroids}

The main subject of this work are quasi-Lie bialgebroids.
 We begin by recalling the definition and give some examples.

\begin{defn}\cite{Roy}
A \emph{quasi-Lie bialgebroid} is a Lie algebroid $(A, \br_A,\rho)$
equipped with a degree-one derivation $\d_*$ of the Gerstenhaber
algebra  $\left(\Gamma(\wedge^\bullet A), \wedge, \br_A\right)$ and
a $3$-section of $A$, $X_A\in\Gamma(\wedge^{3} A)$ such that
\[\d_*X_A=0 \quad \mbox{and} \quad \d_*^2=\brr{X_A, - }_A.\]
\end{defn}

If $X_A$ is the null section, then $\d_*$ defines a structure of Lie
algebroid on $A^*$ such that $\d_*$ is a derivation of $\br_A$. In
this case we say that $(A,A^*)$ is a Lie bialgebroid.

Examples of quasi-Lie bialgebroids arise from different well known
geometric structures. We will illustrate some of them that will be
needed in our work.

\begin{ex}\label{ex:quasi:Lie:null:structure}
Let $(A, \br_A, \rho)$ be a Lie algebroid and consider any closed
3-form $\phi$. Equipping $A^*$ with the null Lie algebroid
structure,  $(A^*, \d_A, \phi)$ is canonically a quasi-Lie
bialgebroid.
\end{ex}

\begin{ex}\label{ex:twisted:Poisson:structure}[Lie algebroid with a twisted Poisson structure]
Let  $\pi\in\Gamma(\wedge^{2}A)$ be a bivector on the Lie
algebroid $(A,\br_A,\rho)$ and denote by $\pi^\sharp$ the usual
bundle map
\[
\begin{array}{llll}
  \pi^\sharp: &  A^\ast & \longrightarrow &A\\
  & \al &\longmapsto  & \pi^\sharp(\al)=i_\al \pi. \\
\end{array}
\]
This  map can be extended to a bundle map from
$\Gamma(\wedge^\bullet A^*)$ to $\Gamma(\wedge^\bullet A)$,  also
denoted by $\pi^\sharp$, as follows:
\[
\pi^\sharp(f)=f  \quad \mbox{and} \quad \eval{\pi^\sharp (\mu),
\al_1\wedge\ldots\wedge\al_k }=(-1)^k \mu \left(\pi^\sharp(\al_1),
\ldots, \pi^\sharp(\al_k)\right),
\]
for all $\ds f\in C^\infty(M)$ and $\ds \mu\in\Gamma(\wedge^k A^*)$
and  $\ds \al_1,\ldots, \al_k\in\Gamma(A^*)$.

Let $\phi\in \Gamma(\wedge^{3} A^*)$  be a closed 3-form on $A$. We
say that $(\pi, \phi)$ defines a \emph{twisted Poisson structure on
$A$} \cite{SevWei} if
 \[
 \brr{\pi,\pi}_A=2\,\pi^\sharp (\phi).
 \]
 In this case, the  bracket on the sections
of $A^\ast$  defined by
\begin{align*}
[\al,\be]^\phi_{\pi}&=\Lie_{\pi^\sharp\al}\be-\Lie_{\pi^\sharp\be}\al-\d\left(\pi(\al,\be)\right)+
\phi(\pi^\sharp\al, \pi^\sharp \be, - ), \quad \forall
\al,\be\in\Gamma(A^\ast),
\end{align*}
is a Lie bracket and $A^*_{\pi,\phi}=(A^\ast, \br_{\pi}^\phi,
\rho\circ\pi^\sharp)$ is a Lie algebroid. The differential of this
Lie algebroid is given by
\[\d^\phi_\pi X=[\pi , X]_A- \pi^\sharp( i_X\phi),\quad  \forall X\in \Gamma(A).\]

The pair $(A,A^\ast)$  is not a Lie bialgebroid but when we consider
the
  bracket on $\Gamma(A)$ defined by:
\[
\brr{X,Y}^\prime=\brr{X,Y}_A -\pi^\sharp( \phi(X,Y, - )), \quad
\forall X,Y\in\Gamma(A),
\]
the associated differential $\d^\prime$, given by
\begin{align*}\d^\prime f=\d f \quad \mbox{and} \quad  \d^\prime\al=\d\al - i_{\pi^\sharp\al}\phi, \quad  \forall f\in C^\infty(M), \al\in \Gamma(A^*), \end{align*}
defines on $A_{\pi,\phi}^*$ a structure of quasi-Lie bialgebroid
$(A_{\pi,\phi}^*, \d^\prime, \phi).$

One should notice that when $\phi=0$, $\pi$ is a Poisson bivector.
The Lie algebroid $A^*_{\pi,0}$ is simply denoted by $A^*_\pi$, and
together with the Lie algebroid $A$ it defines a special kind of Lie
bialgebroid called a \emph{triangular Lie bialgebroid}.
\end{ex}

 Any bundle map $\Phi:A\to B$ induces a map $\Phi^*: \Gamma(B^*)\to \Gamma(A^*)$ which assigns to each section $\al\in\Gamma(B^*)$
 the section
$\Phi^*\al$ given by
\[
\Phi^*\al(X)(m)=\eval{\al(\phi(m)), \Phi_m X(m)}, \quad \forall m\in
M, \, X\in \Gamma(A),
\]
where $\phi:M\to N$ is  the map induced by $\Phi$ on the base manifolds.
We denote by the same latter $\Phi^*$ the extension of this map to the multisections of $B^*$, where we set $\Phi^*f=f\smc \phi$, for $f\in C^\infty (N)$.

Let $A\to M$ and $B\to N$ be two Lie algebroids. Recall that a
\emph{Lie algebroid morphism} is a bundle map $\Phi:A\to B$
such that $\Phi^*:(\Gamma(\wedge^\bullet B^*), \d_B)\to (\Gamma(\wedge^\bullet A^*),\d_A)$ is a chain map.

Generalizing the notion of Lie bialgebroid morphism we propose the
following definition of morphism between quasi-Lie bialgebroids:

\begin{defn}\label{def:quasi:morphism}
Let $(A,\d_{A^*},X_A)$ and $(B, \d_{B^*}, X_B)$ be quasi-Lie
bialgebroids over  $M$ and $N$, respectively. A bundle map
$\Phi:A\to B$ is  a \emph{quasi-Lie bialgebroid morphism }if
\begin{itemize}
\item[1)] $\Phi$ is a Lie algebroid morphism;
\item[2)] $\Phi^*$ is compatible with the brackets on the sections of $A^*$ and $B^*$:
          \[\brr{\Phi^*\al, \Phi^*\be}_{A^*}=\Phi^*\brr{\al,\be}_{B^*};\]
\item[3)] the vector fields $\rho_{B^*}(\al)$ and $\rho_{A^*}(\Phi^*\al)$ are $\phi$-related:
           \[T\phi\cdot \rho_{A^*}(\Phi^*\al)=\rho_{B^*}(\al)\smc \phi;\]
\item[4)] $ \Phi X_A=X_B\smc\phi$,
\end{itemize}
where $\al$, $\be\in \Gamma(B^*)$ and $\phi:M\to N$ is the smooth
map induced by $\Phi$ on the base.
\end{defn}

\begin{ex}
A Lie bialgebroid morphism \cite{mck2005} is a Lie algebroid
morphism which is also a Poisson map, when we consider the
Lie-Poisson structures induced by their dual Lie algebroids. We can
easily see that in case we are dealing with Lie bialgebroids, the
definition of quasi-Lie bialgebroid morphism coincides with the one
of Lie bialgebroid morphism.
\end{ex}

\begin{ex} Consider $(A,\d_{A*},X_A)$ and $(B, \d_{B*}, X_B)$ two quasi-Lie bialgebroids
over the same base manifold $M$. We can see that a base preserving
quasi-Lie bialgebroid morphism (such that $\phi=\textrm{id}$) is a
bundle map $\Phi:A\to B$ such that $\Phi^*\smc \d_{B}=\d_{A}\smc
\Phi^*$, $\Phi\smc \d_{A^*}=\d_{B^*}\smc \Phi$ and $\ds \Phi
X_A=X_B$.\end{ex}

Other examples of quasi-Lie bialgebroid morphisms will appear in the
next section associated with quasi-Nijenhuis structures.

\subsection{Courant algebroids}
A \emph{Courant algebroid} $E\to M$ is a vector bundle over a
manifold $M$ equipped with a nondegenerate symmetric bilinear form
$\eval{ \, , \, }$, a vector bundle map $\rho: E\to TM$ and a
bilinear bracket $\smc$ on $\Gamma(E)$ satisfying:
 \begin{itemize}
 \item[C1)] $\ds e_1\smc(e_2\smc e_3)=(e_1\smc e_2)\smc e_3+e_2\smc(e_1\smc e_3)$
 \item[C2)] $\ds e\smc e=\rho^*  d\eval{e,e}$
 \item[C3)] $\ds \Lie_{\rho(e)}\eval{e_1,e_2}=\eval{e\smc e_1, e_2}+\eval{e_1, e\smc e_2}$
 \item[C4)] $\ds \rho(e_1\smc e_2)=\brr{\rho(e_1),\rho(e_2)}$
 \item[C5)] $\ds e_1\smc fe_2=f(e_1\smc e_2)+\Lie_{\rho(e_1)f}e_2$,
 \end{itemize}
for all $e,e_1,e_2,e_3\in\Gamma(E)$, $f\in C^\infty(M)$.

Associated with the bracket $\smc$, we can define  a skew-symmetric
bracket on the sections of $E$ by:
\[
\cbrr{e_1,e_2}=\frac{1}{2}\left(e_1\smc e_2- e_2\smc e_1\right)
\]
and the properties C1)-C5) can be expressed in terms of this
bracket.

\begin{ex}\label{{ex:Standard:Courant:algebroid}}[Standard Courant algebroid]
Let $(A,\br_A, \rho_A)$ be a Lie algebroid. The double $A\oplus A^*$
equipped with the skew-symmetric bracket
\[
\cbrr{X+\al,Y+\be}=\brr{X,Y}_A+ \left( \Lie_{X}\be - \Lie_Y\al
+\frac{1}{2}\d (\al(Y)-\be(X)) \right),
\]
the pairing $\eval{X+\al, Y+\be}=\al(Y) + \be(X)$ and the anchor
$\rho(X+\al)=\rho_A(X)$ is a Courant algebroid.
\end{ex}

A standard Courant algebroid is a simple example of a Courant
algebroid which is a the double of a Lie bialgebroid.
 The construction of Courant algebroids as doubles of Lie bialgebroids is implicit in the next example, where we explicit the construction of the double of a quasi-Lie bialgebroid.

\begin{ex}\label{double:quasi:Lie:bialgebroid} [Double of a
quasi-Lie bialgebroid] Let $(A, \d_*, X_A)$ be a quasi-Lie
bialgebroid. Its double $E=A\oplus A^*$ is a Courant algebroid if it
is equipped with the pairing $\eval{X+\al, Y+\be}=\al(Y) + \be(X)$,
the anchor $\rho=\rho_A+\rho_{A^*}$ and the bracket
\begin{align*}
\cbrr{X+\al, Y+\be}&=\brr{X,Y}_A+ \Lie^*_\al Y - \Lie^*_\be X -\frac{1}{2}\d_* (\al(Y)-\be(X))+ X_A(\al,\be, - )\\
&+ \left(\brr{\al,\be}_*+ \Lie_{X}\be - \Lie_Y\al + \frac{1}{2}\d
(\al(Y)-\be(X)) \right),
\end{align*}

Taking $X_A=0$ we have the Courant algebroid structure of a double
of a Lie bialgebroid.

\

Another particular case that worths to be mentioned is the double of
the quasi-Lie bialgebroid $(A^*, \d, \phi)$ illustrated
 in Example \ref{ex:quasi:Lie:null:structure}. In this case the anchor is simply  $\rho_E=\rho_A$ and the skew-symmetric bracket
  is a twisted version of the standard Courant bracket given by:
\begin{equation}\label{eq:exact:courant:algebroid}
\cbrr{X+\al,Y+\be}^\phi=\brr{X,Y}_A  + \Lie_{X}\be - \Lie_Y\al +
\frac{1}{2}\d (\al(Y)-\be(X))+  \phi(X,Y, - ).
\end{equation}
\end{ex}

\subsection{Dirac structures supported on a submanifold}
Dirac structures play an important role in the theory of Courant
algebroids. Let us recall them before proceed.

A \emph{Dirac structure on a Courant algebroid} $E$ is a subbundle
$A\subset E$, which is maximal isotropic with respect to the pairing
$\eval{\, , \,}$ and it is integrable in the sense that the space of
the sections of $A$ is closed under the bracket on $\Gamma(E)$.
Restricting the skew-symmetric bracket of $E$ and the anchor to $A$,
we endow the Dirac structure with a Lie algebroid structure $(A,
\cbrr{\, , \, }_{|_A}, \rho_{E_{|_A}})$. A Courant algebroid
together with a Dirac structure is called a \emph{Manin pair}.

As a way to generalize Dirac structures we have the concept of
generalized  Dirac structures or Dirac structures supported on a
submanifold of the base manifold.

\begin{defn}\cite{AleXu}
On a Courant algebroid $E\to M$, a \emph{Dirac structure supported
on a submanifold $P$ of $M$} or a \emph{generalized Dirac structure}
is a subbundle $F$ of $E_{|_P}$ such that:
 \begin{itemize}
 \item[D1)] for each $x\in P$, $F_x$ is maximal isotropic;
 \item[D2)] $F$ is compatible with the anchor, i.e.
         $\ds \rho_{|_P}(F)\subset TP $;
 \item[D3)]  For each $e_1, e_2\in \Gamma(E)$, such that ${e_1}_{|_P}$, ${e_2}_{|_P}\in \Gamma (F)$,
            we have $\ds ({e_1}\smc{e_2})_{|_P}\in \Gamma(F).$
 \end{itemize}
\end{defn}

Obviously, a Dirac structure supported on the whole base manifold
$M$ is  an usual   Dirac structure of the Courant algebroid.

Generalizing the Theorem $6.11$ on \cite{AleXu} to quasi-Lie
bialgebroids we have:

\begin{thm}\label{thm:Dirac:direct:sum}
Let $E=A\oplus A^*$ be the double of a quasi-Lie bialgebroid
$(A,\d_*, Q_A)$ over the  manifold $M$, $L\to P$ a vector subbundle
of $A$ over a submanifold $P$ of $M$ and $F=L\oplus L^\bot$. Then
$F$ is a Dirac structure supported on $P$ if and only if the
following conditions hold:
\begin{itemize}
 \item[1)] $L$ is a Lie subalgebroid of $A$;
 \item[2)] $L^\bot$ is closed for the bracket on $A^*$ defined by $\d_*$;
 \item[3)]  $L^\bot$ is compatible with the anchor, i.e., $\rho_{A^*|_P}(L^\bot)\subset TP$;
 \item[4)]  ${Q_A}_{|_{L^{\bot}}}=0$.
 \end{itemize}
\end{thm}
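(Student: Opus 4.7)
My plan is to verify the three defining properties D1, D2, D3 of a generalized Dirac structure for $F=L\oplus L^\bot$ and to translate each into one of the conditions (1)--(4). Maximal isotropy D1 is automatic, since for any subbundle $L\subset A|_P$ the subbundle $L\oplus L^\bot\subset E|_P$ has half the rank of $E|_P$ and is isotropic under the pairing $\eval{X+\al,Y+\be}=\al(Y)+\be(X)$. For D2, the anchor $\rho=\rho_A+\rho_{A^*}$ of the double splits the requirement $\rho|_P(F)\subset TP$ into $\rho_A(L)\subset TP$, which is the anchor part of $L$ being a Lie subalgebroid (half of condition~(1)), and $\rho_{A^*}(L^\bot)\subset TP$, which is precisely condition~(3).

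The heart of the proof is D3, which I would treat term-by-term using the bracket of Example~\ref{double:quasi:Lie:bialgebroid}. Given $X,Y$ extending sections of $L$ and $\al,\be$ extending sections of $L^\bot$, the $A$-part of the bracket must restrict to $L$ on $P$ and the $A^*$-part to $L^\bot$. The terms $\brr{X,Y}_A$ and $\brr{\al,\be}_*$ yield the bracket halves of conditions~(1) and~(2). Because $\al(Y)-\be(X)$ vanishes on $P$, a standard anchor argument gives $\d(\al(Y)-\be(X))|_P\in L^\bot$ and $\d_*(\al(Y)-\be(X))|_P\in L$, using the anchor compatibilities from~(1) and~(3). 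The Lie-derivative terms $\Lie_X\be$ and $\Lie^*_\al Y$ are controlled by the duality identities
\[
\rho_A(X)\eval{\be,Z}=\eval{\Lie_X\be,Z}+\eval{\be,\brr{X,Z}_A},\qquad
\rho_{A^*}(\al)\eval{Y,\ga}=\eval{\Lie^*_\al Y,\ga}+\eval{Y,\brr{\al,\ga}_*},
\]
combined with conditions~(1)--(3): testing against $Z\in L$ (resp.\ $\ga\in L^\bot$) forces $\Lie_X\be|_P\in L^\bot$ (resp.\ $\Lie^*_\al Y|_P\in L$). Finally, the $A$-valued term $Q_A(\al,\be,-)$ lies in $L$ on $P$ for all $\al,\be\in L^\bot$ if and only if $Q_A(\al,\be,\ga)|_P=0$ for every $\ga\in L^\bot$, which is condition~(4).

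For the converse direction, each condition is extracted by isolating a single term of the bracket with a suitable choice of test sections: taking $X=Y=0$ reduces the bracket to $\brr{\al,\be}_*+Q_A(\al,\be,-)$, from which conditions~(2) and~(4) are read off in the $A^*$- and $A$-components respectively, while taking $\al=\be=0$ recovers the bracket half of condition~(1); the anchor conditions are already forced by D2. The main technical obstacle I expect is the bookkeeping for the Lie-derivative terms in D3, which mix $A$- and $A^*$-sections and require both anchor conditions together with both bracket-closedness conditions. Once the duality identities above are in place, each term reduces to a short computation using the Leibniz rule and the vanishing of $\al(Y)-\be(X)$ on $P$.
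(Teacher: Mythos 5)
Your proposal is correct and follows essentially the same route as the paper: maximal isotropy is automatic, D2 yields the two anchor conditions, D3 is verified term by term with the differential terms $\d(\al(Y)-\be(X))$, $\d_*(\al(Y)-\be(X))$ handled via their vanishing on $P$ together with the anchor compatibilities, the mixed terms via duality/Koszul-type identities, and the only-if direction obtained by specializing to pure $A$- and pure $A^*$-sections exactly as in the paper's computation of $(\al\smc\be)_{|_P}$. The only cosmetic difference is that you work with the skew-symmetric bracket while condition D3 (and the paper's proof) uses the non-skew-symmetric one; the discrepancy $\rho^*\d\eval{e_1,e_2}$ is absorbed by the same anchor argument since $\eval{e_1,e_2}$ vanishes on $P$.
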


\begin{proof}
Since $F=L\oplus L^\bot$, this  is  a Lagrangian subbundle of $E$.
Suppose $F$ is a  Dirac structure supported on $P$. By definition,
we immediately deduce that $L$ is a Lie subalgebroid of $A$  and,
for $\al$, $\be$ sections of $A^*$ such that $\al_{|_P}$,
$\be_{|_P}\in \Gamma(L^\bot)$, we have
\[(\al\smc \be)_{|_P}=-Q_A(\al,\be, - )_{|_P}+\brr{\al,\be}_{A^*|_P}\in \Gamma(L\oplus L^\bot),\]
 and this  means that
$\brr{\al,\be}_{A^*|_P}\in L^\bot$ and $Q_A(\al,\be, - )_{|_P}\in
L$, or equivalently, $L^\bot$ is closed with respect to the bracket
of $E$ and ${Q_A}_{|_{L^{\bot}}}=0$.

Moreover, since $F$ is compatible with the anchor,
$$\rho_{A^*|_P}(\al_{|_P})=\rho_{A^*}(\al)_{|_P}=\rho_E(\al)_{|_P}\in TP,$$
so $L^\bot$ is compatible with $\rho_{A^*}$.

Conversely, suppose $L$ is a Lie subalgebroid of $A$, $L^\bot\subset
A^*$ is closed  for $\br_{A^*}$, ${\rho_{A^*}}_{|_P}(L^\bot)\subset
TP$ and $Q_{A|_{L^\bot}}=0$. Obviously $F$ is  compatible with the
anchor. We are left to prove that $F$ is closed with respect to the
bracket on $E$. Let $X, Y\in \Gamma(A)$ and $\al, \be\in
\Gamma(A^*)$ such that $X+\al$ and $Y+\be$ restricted to $P$ are
sections of $F$, then
\begin{align*}
{(X+\al)\smc (Y+\be)}_E & =\brr{X,Y}_A+i_{\al} \d_* Y -i_{\be} \d_* X +\d_*\left(\al(Y)\right)-Q_A(\al,\be, -) \\
& + \brr{\al,\be}_{A^*}+\Lie_{X} \be-i_Y \d \al.
\end{align*}

By hypothesis, we immediately have  that
$$\brr{X,Y}_{A|_P}=\brr{X_{|_P},Y_{|_P}}_L\in \Gamma(L), $$
$$\brr{\al,\be}_{A^*|_P}=\brr{\al_{|_P},\be_{|_P}}_{L^{\bot}}\in \Gamma(L^\bot) $$
\mbox{and} $$Q_A(\al,\be, -)_{|_P}\in   \Gamma(L).$$

Now, notice that $\al(Y)_{|_P}=0$, so $d\al(Y)_{|_P}\in
\nu^*(P)=(TP)^0$. Since $\rho_{A^*|_P}(L^\bot)\subset TP$, we  have
that
$$\d_{*} \al(Y)_{|_P}=\rho^*_{A^*|_P}d\al(Y)\in \Gamma(L).$$
Analogously, $\ds \d \al(Y)_{|_P}=\rho^*_{A}d\al(Y)_{|_P}\in
\Gamma(L^\bot).$

Also, $$\d_* Y(\al,\be)_{|_P}=\left(\rho_{A^*}(\al)\cdot \be(Y)-
\rho_{A^*}(\be)\cdot \al(Y)-\brr{\al,\be}_{A^*}(Y)\right)_{|_P}=0,$$
so $i_{\al}\d_* Y\in \Gamma(L)$. Analogously, $\ds i_{X}\d \be\in
\Gamma(L^{\bot})$.

All these conditions allow us to say that $(X+\al)\smc (Y+\be) \in
\Gamma(L\oplus L^\bot)$ and, consequently, $F$ is a Dirac structure
supported on $P$.
\end{proof}

\begin{cor}\cite{AleXu}
Let $E=A\oplus A^*$ be the double of a Lie bialgebroid then
$F=L\oplus L^\bot$ is a Dirac structure supported on $P$ if and only
if $L$ and $L^\bot$ are Lie subalgebroids of $A$ and $A^*$.
\end{cor}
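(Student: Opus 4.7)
The plan is to derive this corollary as an immediate specialization of the preceding Theorem \ref{thm:Dirac:direct:sum}. A Lie bialgebroid is, by the discussion following the definition of quasi-Lie bialgebroid, precisely the case of a quasi-Lie bialgebroid in which the $3$-section $Q_A$ is identically zero. Consequently, condition 4) of Theorem \ref{thm:Dirac:direct:sum}, namely ${Q_A}_{|_{L^\bot}}=0$, is automatically satisfied and carries no content.

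It then suffices to observe that conditions 2) and 3) of the theorem together amount exactly to the statement that $L^\bot$ is a Lie subalgebroid of $A^*$: by definition, a subbundle is a Lie subalgebroid when its sections are closed under the bracket (condition 2) and when its image under the anchor is tangent to the base submanifold (condition 3). Combined with condition 1), the four conditions collapse to ``$L$ is a Lie subalgebroid of $A$ and $L^\bot$ is a Lie subalgebroid of $A^*$''.

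Thus the only step is to invoke Theorem \ref{thm:Dirac:direct:sum}, discard condition 4) using $Q_A = 0$, and repackage conditions 2)--3) into the Lie subalgebroid condition on $L^\bot$. There is no genuine obstacle; this is a direct specialization requiring no additional computation beyond matching definitions.
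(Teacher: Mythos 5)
Your proposal is correct and matches the paper's intent exactly: the corollary is stated as an immediate consequence of Theorem \ref{thm:Dirac:direct:sum}, obtained by setting $Q_A=0$ so that condition 4) is vacuous and conditions 2)--3) combine into the definition of a Lie subalgebroid of $A^*$ supported on $P$. The paper gives no separate proof, precisely because this specialization is all that is needed.
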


Notice that when $P=M$ we obtain Proposition 7.1 of \cite{LiuWeiXu}.

\begin{cor}\cite{AleXu}
Let $E=TM\oplus T^*M$ be the standard Courant algebroid twisted by
 the 3-form $\phi\in \Omega^{3}(M)$ (see equation (\ref{eq:exact:courant:algebroid}) in Example \ref{double:quasi:Lie:bialgebroid}). For any submanifold $P$ of
$M$, $F=TP\oplus \nu^*P$ is a Dirac structure supported on $M$ iff
$i^*\phi=0$, where $i:P\hookrightarrow M$ is the inclusion map.
\end{cor}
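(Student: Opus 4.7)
The plan is to deduce the statement as a direct application of Theorem~\ref{thm:Dirac:direct:sum}. The first step is to identify the twisted standard Courant algebroid $E$ with the double of a specific quasi-Lie bialgebroid: namely, the one from Example~\ref{ex:quasi:Lie:null:structure} with underlying Lie algebroid $A=TM$, which produces the quasi-Lie bialgebroid $(T^*M,\d,\phi)$. The bracket (\ref{eq:exact:courant:algebroid}) is exactly the double bracket for this quasi-Lie bialgebroid. Hence, when matching notation with Theorem~\ref{thm:Dirac:direct:sum}, the ``base'' Lie algebroid is $T^*M$ endowed with the null bracket, its dual is $TM$ with its standard Lie algebroid structure (which is precisely the one induced by $\d_{*}=\d$), and $Q_A=\phi$. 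The candidate Dirac structure fits the template $F=L\oplus L^{\bot}$ by taking $L=\nu^{*}P\subset T^{*}M$, so that $L^{\bot}=TP\subset TM$, since $\nu^{*}P=(TP)^{0}$ implies $(\nu^{*}P)^{\bot}=TP$.

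With this identification, conditions (1)--(3) of Theorem~\ref{thm:Dirac:direct:sum} will be automatic. Condition (1) holds trivially because every subbundle of $T^{*}M$ is a Lie subalgebroid for the null bracket (with vanishing anchor). For condition (2), the bracket on $TM$ induced by $\d_{*}=\d$ is the usual Lie bracket of vector fields, and $TP$ is closed under it by the classical fact that Lie brackets of vector fields tangent to a submanifold remain tangent. Condition (3) reduces to $\rho_{TM}|_{P}(TP)\subset TP$, which is tautological since $\rho_{TM}=\mathrm{id}_{TM}$.

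The entire content of the corollary therefore lives in condition (4): $Q_{A}|_{L^{\bot}}=0$ translates into $\phi(X,Y,Z)=0$ for all $X,Y,Z\in\Gamma(TP)$, which is exactly the condition $i^{*}\phi=0$. This yields the ``if and only if'' statement. The only subtle point, and really the only thing to get right, is the bookkeeping of which factor plays the role of ``$A$'' in Theorem~\ref{thm:Dirac:direct:sum}: because the quasi-Lie bialgebroid structure from Example~\ref{ex:quasi:Lie:null:structure} is carried by the dual bundle, it is $T^{*}M$ that plays the role of ``$A$'' (and $TM$ the role of ``$A^{*}$''), so $L$ must be taken as the conormal bundle $\nu^{*}P$ rather than $TP$. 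Once this is settled, no further computation is required.
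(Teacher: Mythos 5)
Your proposal is correct and follows exactly the route the paper intends: the corollary is stated as a direct consequence of Theorem~\ref{thm:Dirac:direct:sum} applied to the quasi-Lie bialgebroid $(T^*M,\d,\phi)$ of Example~\ref{ex:quasi:Lie:null:structure}, with $L=\nu^*P$ and $L^{\bot}=TP$, so that conditions (1)--(3) are automatic and condition (4) becomes $i^*\phi=0$. Your remark about the role reversal (the theorem's ``$A$'' being $T^*M$ with the null structure) is precisely the bookkeeping point one must get right, and you have it.
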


Like Lie bialgebroids morphisms, quasi-Lie bialgebroid morphisms
give rise to Courant algebroid morphisms. Let us recall what is a
Courant algebroid morphism.

\begin{defn}\cite{AleXu}
A \emph{Courant algebroid morphism} between two Courant algebroids
$E\to M$ and $E'\to M'$ is a Dirac structure in $E\times \overline
E'$ supported on $\graf \phi$, where $\phi:M\to M'$ is a smooth map
and $\overline E'$ denotes the Courant algebroid obtained from $E'$
by changing the sign of the bilinear form.
\end{defn}

\begin{thm}\label{thm:quasi:Courant:morph}
Let $E_1=A\oplus A^*$ and $E_2=B\oplus B^*$ be doubles of quasi-Lie
bialgebroids $(A,\d_{A*}, Q_A)$ and $(B,\d_{B*}, Q_B)$ and
$(\Phi,\phi):A\to B$ a quasi-Lie bialgebroid morphism, then
\[
F=\set{(a+\Phi^*b^*,\Phi a+ b^*)| a\in A \mbox{ and } b^*\in B^*
\mbox{over compatible fibers}}\subset E_1\times \overline{E_2}
\]
is a Dirac structure supported on $\graf \phi$, i.e. $F$ is a
Courant algebroid morphism.
\end{thm}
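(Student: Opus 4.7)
The plan is to apply Theorem \ref{thm:Dirac:direct:sum} to the product Courant algebroid $E_1 \times \overline{E_2}$. To do so, I first realize $E_1 \times \overline{E_2}$ as the double of the quasi-Lie bialgebroid obtained by endowing $A \times B \to M \times N$ with the product Lie algebroid structure, endowing $A^* \times B^*$ with the differential $\d_{A^*} \oplus (-\d_{B^*})$, and taking as quasi-element $X_A \oplus (-X_B) \in \Gamma(\wedge^3 (A \times B))$. The sign reversals on the $B$-factor exactly compensate for the pairing reversal in $\overline{E_2}$, and a routine check shows that the corresponding double, as constructed in Example \ref{double:quasi:Lie:bialgebroid}, reproduces the Courant structure on $E_1 \times \overline{E_2}$.

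Next, I decompose $F$ as $L \oplus L^\bot$, where $L = \graf \Phi \subset A \times B$ sits over $\graf \phi \subset M \times N$. A short computation of the annihilator of $L$ with respect to the Courant pairing identifies $L^\bot$ with $\{(\Phi^*b^*, b^*) : b^* \in B^*\}$, so that $F$ is a Lagrangian subbundle of $E_1 \times \overline{E_2}$ along $\graf \phi$, verifying condition D1) of a generalized Dirac structure.

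Then the four hypotheses of Theorem \ref{thm:Dirac:direct:sum} applied to this $L$ and $L^\bot$ translate one-to-one with the four items of Definition \ref{def:quasi:morphism}: (1) $L$ is a Lie subalgebroid of $A \times B$ along $\graf \phi$ iff $\Phi$ is a Lie algebroid morphism; (2) $L^\bot$ is closed under the bracket on $A^* \times B^*$ iff $\Phi^*$ intertwines the dual brackets; (3) $L^\bot$ is compatible with the anchor $\rho_{A^*} \oplus \rho_{B^*}$ (i.e.\ takes values in $T(\graf \phi)$) iff $\rho_{A^*}(\Phi^* b^*)$ is $\phi$-related to $\rho_{B^*}(b^*)$; and (4) $(X_A \oplus (-X_B))|_{L^\bot} = 0$ iff $\Phi X_A = X_B \smc \phi$, the last equivalence being the naturality relation $\eval{X_A, \Phi^*\beta_1 \wedge \Phi^*\beta_2 \wedge \Phi^*\beta_3} = \eval{(\wedge^3\Phi)X_A, \beta_1 \wedge \beta_2 \wedge \beta_3}$ evaluated over $\graf \phi$.

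The main obstacle is the first step, namely fixing the sign conventions on the product quasi-Lie bialgebroid structure so that its double is genuinely $E_1 \times \overline{E_2}$ rather than $E_1 \times E_2$. Once this identification is established, each of the four verifications above is a direct matching between a condition of Theorem \ref{thm:Dirac:direct:sum} and a defining axiom of a quasi-Lie bialgebroid morphism, and no further computation is required.
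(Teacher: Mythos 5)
Your overall strategy is exactly the paper's: realize $F$ as $L\oplus L^\bot$ with $L=\graf\Phi$, reduce to Theorem \ref{thm:Dirac:direct:sum} applied to a product quasi-Lie bialgebroid structure on $A\times B$ over $M\times N$, and match its four conditions with the four axioms of Definition \ref{def:quasi:morphism}. However, the step you yourself single out as the main obstacle --- fixing the signs so that the double of the product structure is $E_1\times\overline{E_2}$ --- is where your bookkeeping goes wrong, and as written the key intermediate object is not a quasi-Lie bialgebroid. If the differential on $\Gamma(\wedge^\bullet(A\times B))$ is $\d_{A^*}\oplus(-\d_{B^*})$, then its square is $\d_{A^*}^2+\d_{B^*}^2=\brr{X_A\oplus X_B,\,-\,}$, so the quasi-element must be $X_A\oplus X_B$, \emph{not} $X_A\oplus(-X_B)$; with your choice the axiom $\d_*^2=\brr{X_A,-}_A$ fails (unless $\brr{X_B,-}=0$) and Theorem \ref{thm:Dirac:direct:sum} cannot be invoked. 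Relatedly, the identification of this standard double with $E_1\times\overline{E_2}$ is not the identity: it is the map $b+b^*\mapsto b-b^*$ on the second factor (equivalently, $A^*\times B^*$ is identified with $(A\times B)^*$ via $(\al,\be)\mapsto(\al,-\be)$), and under it the annihilator of $\graf\Phi$ in the canonical dual is $L^\bot=\set{(\Phi^*b^*,-b^*)}$, not $\set{(\Phi^*b^*,b^*)}$. Your two sign slips happen to cancel in condition 4), but they do not cancel in conditions 2) and 3): closedness of $\set{(\Phi^*b^*,b^*)}$ under the bracket $\br_{A^*}\oplus(-\br_{B^*})$ would translate to $\brr{\Phi^*\al,\Phi^*\be}_{A^*}=-\Phi^*\brr{\al,\be}_{B^*}$, and similarly the anchor condition acquires a spurious minus sign (note also that you wrote the anchor as $\rho_{A^*}\oplus\rho_{B^*}$ while negating the differential, which is inconsistent). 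The paper's proof avoids all of this by working throughout in the standard double of $(A\times B,\ \d_{A^*}\oplus(-\d_{B^*}),\ Q_A\oplus Q_B)$ with $L^\bot=\set{(\Phi^*b^*,-b^*)}$, and only at the very end transporting the resulting Dirac structure to $E_1\times\overline{E_2}$ via $b+b^*\mapsto b-b^*$. Your argument becomes correct once you adopt these signs; as it stands, the structure to which you apply Theorem \ref{thm:Dirac:direct:sum} does not exist.
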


\begin{proof}
The idea of the proof is analogous to the idea of the  proof of
Theorem 6.10 in \cite{AleXu} for Lie bialgebroid morphisms.

Consider $M$ and $N$ the base manifolds of $A$ and $B$,
respectively. Consider the following subbundles over $\graf \phi$
$$L=\graf \Phi=\set{(a,\Phi a)|\, a\in A}\subset A\times B$$
and
$$L^\bot=\set{(\Phi^*b^*, -b^*)|\, b^*\in B^*}\subset A^*\times {B^*}.$$

Since $\Phi$ is a Lie algebroid morphism, $L$  is clearly a Lie
subalgebroid of $A\times B$. Analogously, we can also conclude that
$L^\bot$ is closed for the bracket on  $A^*\times \overline{B^*}$
(where $\overline{B^*}$ denotes the bundle $B^*$ with  bracket
$\br_{\overline{B^*}}=-\br_{B^*}$) and it is compatible with the
anchor $\rho_{A^*\times \overline{B^*}}=(\rho_{A^*},-\rho_{B^*})$.
Also, since $\Phi Q_A=Q_B\smc \phi$, we have that
$(Q_A,Q_B)_{|L^\bot}=0$. So, Theorem \ref{thm:Dirac:direct:sum}
guarantees  that $L\oplus L^\bot$  is a Dirac structure  supported
on $\graf \phi$ of the double $A\times B\oplus A^*\times
\overline{B^*}$ which is the Courant algebroid $A\oplus A^*\times
B\oplus \overline{B^*}$. Finally, observe that the bundle morphism
$b+ b^*\mapsto b-b^*$ induces a canonical isomorphism between $F$
and $L\oplus L^\bot$ and the result follows.
\end{proof}

\section{Poisson Quasi-Nijenhuis Lie algebroids}             %
\label{sec:Quasi:Poisson:Nij}                            %

Let $(A,\br,\rho)$ be a Lie algebroid over a manifold $M$.  The
torsion  of a bundle map $N:A\to A$ (over the identity) is defined
by
\begin{equation}
\label{eq:Nijenhuis} \T_N(X,Y):=[NX,NY]-N[X,Y]_N, \quad  X,Y\in
\Gamma(A),
\end{equation}
where $\br_N$ is given by:
$$
[X,Y]_N:=[NX,Y]+[X,NY]-N[X,Y],\quad X,Y\in \Gamma(A).
$$

When $\T_N=0$, the bundle map $N$ is called a \emph{Nijenhuis
operator}, the triple $A_N=(A,\br_N,\rho_N=\rho\smc N)$ is a new Lie
algebroid  and  $N:A_N\to A$ is a Lie algebroid morphism.

\begin{defn}
On a Lie algebroid $A$ with a Poisson structure $\pi\in\Gamma(\wedge^2 A)$, we say that a
bundle map $N:A\to A$ is \emph{compatible} with $\pi$ if
$N\pi^\sharp=\pi^\sharp N^*$ and the \emph{Magri-Morosi concomitant} vanishes:
\[\C(\pi,N)(\al,\be)= \brr{\al,\be}_{N\pi}-\brr{\al,\be}_\pi^{N^\ast}=0,\]
where $\br_{N\pi}$ is the  bracket defined by the bivector field
$N\pi\in\Gamma(\wedge^{2} A)$, and $\br_\pi^{N^\ast}$ is the Lie
bracket obtained from the Lie bracket $\br_\pi$ by deformation along
the  tensor $N^\ast$.
\end{defn}

As a straightforward generalization of the definition of
quasi-Poisson Nijenhuis manifolds presented in \cite{StiXu}, we
have:

\begin{defn}\label{def:quasiNijLiealg}
A \emph{Poisson quasi-Nijenhuis Lie algebroid} $(A, \pi, N, \phi)$
is a Lie algebroid $A$ equipped with a Poisson structure $\pi$, a
bundle map $N:A\to A$ compatible with $\pi$ and a closed 3-form
$\phi\in\Gamma(\wedge^{3} A^*)$ such that
\[
\T_N(X,Y)=-\pi^\sharp\left(i_{X\wedge Y} \phi \right) \mbox{ and }
\d i_N\phi=0.
\]
\end{defn}

\begin{thm}\label{theor:quasi:Lie:bialg}
If $(A, \pi, N, \phi)$ is a Poisson quasi-Nijenhuis Lie algebroid
then $(A_\pi^*, \d_N, \phi)$ is a quasi-Lie bialgebroid.
\end{thm}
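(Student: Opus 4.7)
The plan is to verify directly the three defining conditions of a quasi-Lie bialgebroid for the triple $(A_\pi^*,\d_N,\phi)$, adapting to our Lie algebroid setting the strategy used by Sti\'enon–Xu in the tangent-bundle case. Since $\pi$ is Poisson, the triangular Lie algebroid $A_\pi^*=(A^*,\br_\pi,\rho\circ\pi^\sharp)$ recalled in Example~\ref{ex:twisted:Poisson:structure} is at our disposal, so the Gerstenhaber algebra $(\Gamma(\wedge^\bullet A^*),\wedge,\br_\pi)$ is well defined. I take $\d_N$ to be the deformed exterior derivative on $\Gamma(\wedge^\bullet A^*)$ given by the graded commutator $\d_N=i_N\circ\d_A-\d_A\circ i_N$; as the graded commutator of the degree-zero derivation $i_N$ with the degree-one derivation $\d_A$, it is automatically a degree-one derivation of the wedge product, and on a $1$-form it reads $(\d_N\al)(X,Y)=\rho(NX)\al(Y)-\rho(NY)\al(X)-\al\brr{X,Y}_N$.

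Three conditions remain: (a) $\d_N$ is a derivation of the Schouten bracket $\br_\pi$; (b) $\d_N\phi=0$; (c) $\d_N^2=\brr{\phi,-}_\pi$. For (b), directly $\d_N\phi=i_N\d_A\phi-\d_A(i_N\phi)=0$, both terms vanishing by $\d\phi=0$ and $\d(i_N\phi)=0$. For (a), the Gerstenhaber Leibniz rule reduces the task to verifying
\[\d_N\brr{\al,\be}_\pi=\brr{\d_N\al,\be}_\pi+\brr{\al,\d_N\be}_\pi\]
for $\al,\be\in\Gamma(A^*)$; expanding $\br_\pi$ by its explicit formula and unwinding both sides with Cartan-style identities, the computation closes precisely through the compatibility hypotheses $N\pi^\sharp=\pi^\sharp N^*$ and $\C(\pi,N)=0$.

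Condition (c), the curvature relation, is the heart of the argument. I would invoke the Fr\"olicher–Nijenhuis-type identity $\d_N^2=\d_{T_N}$, where $\d_{T_N}=i_{T_N}\circ\d_A+\d_A\circ i_{T_N}$ is the operator built from the Nijenhuis torsion regarded as a vector-valued $2$-form on $A$; this identity is purely formal and transfers to any Lie algebroid from the standard proof that $[N,N]_{FN}=2T_N$. Substituting the Poisson quasi-Nijenhuis relation $T_N(X,Y)=-\pi^\sharp(i_{X\wedge Y}\phi)$, it then remains to show $\d_{T_N}=\brr{\phi,-}_\pi$. Since both sides are degree-two derivations of the wedge product, it is enough to verify the identity on $C^\infty(M)$ and on $\Gamma(A^*)$; the functional case is trivial, while the $1$-form case reduces, after unfolding the Schouten bracket of $A_\pi^*$ by the Koszul formula, to an identity equivalent to the stated expression of $T_N$ in terms of $\pi^\sharp$ and $\phi$.

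The principal obstacle is precisely this last identification, $\d_{T_N}\al=\brr{\phi,\al}_\pi$: it demands carefully translating the Schouten bracket of $A_\pi^*$ applied to a $3$-form via the Koszul formula and matching signs and combinatorial factors with the explicit expansion of $\d_{T_N}$. The derivation axiom (a) is the second most delicate step, and it is the structural reason why the Lie algebroid bracket appearing in the conclusion is $\br_\pi$ rather than some further $N$-deformation.
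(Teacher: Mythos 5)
Your proposal is correct in outline and, for the first two conditions, essentially coincides with the paper's proof: $\d_N\phi=i_N\d\phi-\d i_N\phi=0$ is verbatim the paper's first step, and for the derivation property of $\d_N$ over $\br_\pi$ the paper packages your ``Cartan-style'' computation more economically, by observing that $\d$ is a derivation of $\br_{N\pi}$ for the bivector $N\pi$ and that the defect of $\d_N$ being a derivation of $\br_\pi$ is exactly $\d\bigl(\C(\pi,N)(\al,\be)\bigr)$ plus that defect, so the vanishing of the Magri--Morosi concomitant closes the argument in one line. Where you genuinely diverge is the curvature identity $\d_N^2=\brr{\phi,-}_\pi$: the paper evaluates $\d_N^2\al(X,Y,Z)$ directly, substitutes $\T_N(X,Y)=-\pi^\sharp(i_{X\wedge Y}\phi)$, and matches the result term by term against an explicit formula for $\brr{\phi,\al}_\pi(X,Y,Z)$, whereas you invoke the Fr\"olicher--Nijenhuis identity $\d_N^2=\d_{\T_N}$ and then compare two degree-two derivations on generators. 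Your route is more conceptual and isolates cleanly where each hypothesis enters; the paper's is self-contained and avoids having to set up the Fr\"olicher--Nijenhuis calculus on a Lie algebroid. One imprecision you should fix: the one-form case of $\d_{\T_N}\al=\brr{\phi,\al}_\pi$ is \emph{not} equivalent to the torsion condition alone. The paper's computation shows that, after substituting the torsion condition, the residual difference is
\[
\bigl(\d_N^{2}\al-\brr{\phi,\al}_\pi\bigr)(X,Y,Z)=-\d\phi(X,Y,Z,\pi^\sharp\al),
\]
so the closedness of $\phi$ is used a second time, in this final step and not only in proving $\d_N\phi=0$. Since $\d\phi=0$ is part of Definition \ref{def:quasiNijLiealg}, this costs you nothing, but your sketch should acknowledge it.
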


\begin{proof}
First notice that  $\d\phi=0$ and  $\d i_N\phi=0$ imply that
\[
\d_N\phi=\brr{i_N,\d}\phi=i_N\d\phi - \d i_N\phi=0.
\]

Secondly, we notice that since the bundle morphism $N$ and the Poisson structure
$\pi$ are compatible, then $\d_N$ is a derivation of the Lie bracket
$\br_\pi$. In fact, first one directly  sees that $\d$ is a derivation of $\br_{N\pi}$ and, since $\C(\pi,N)$ vanishes and
\begin{align*}
\d (\C(\pi,N)(\al,\be))&=\d_N\brr{\al,\be}_\pi-\brr{\d_N\al,\be}_\pi-\brr{\al,\d_N\be}_\pi\\
&-\d\brr{\al,\be}_{N\pi}+\brr{\d\al,\be}_{N\pi}+\brr{\al,\d\be}_{N\pi},
\end{align*}
we immediately conclude that $\d_N$ is a derivation of $\br_\pi$
(the particular case where $A=TM$ can be found in \cite{YKS}).

\

It remains to prove that $\d_N^{2}=\brr{\phi, - }_\pi$. Using the
definition of $\d_N$, we have:
\begin{align*}
\d_N^{2}\al(X,Y,Z)&= \T_N(X,Y)\eval{\al,Z} - \eval{\al,
\brr{\T_N(X,Y),Z}+\T_N(\brr{X,Y},Z)} + \mbox{c.p.}
\end{align*}
The fact that $\T_N(X,Y)=-\pi^\sharp i_{X\wedge
Y}\phi$ yields:
\begin{eqnarray*}
\lefteqn{\d_N^{2}\al(X,Y,Z)= -\phi(X,Y, \pi^\sharp\d\eval{\al,Z}) - \eval{\al, \Lie_{Z}\left( \pi^\sharp i_{X\wedge Y}\phi\right) - \pi^\sharp i_{\brr{X,Y}\wedge Z }\phi} + \mbox{c.p.}}\\
&=& -\phi(X,Y, \pi^\sharp\d\eval{\al,Z}) - \eval{\al, \left(\Lie_{Z} \pi\right)^\sharp i_{X\wedge Y}\phi + \pi^\sharp \left( \Lie_Z i_{X\wedge Y }\phi\right)} \\
&&+ \phi(\brr{X,Y}, Z, \pi^\sharp \al) + \mbox{c.p.}\\
&=&  -\phi(X,Y, \pi^\sharp\d\eval{\al,Z}) - \eval{\al, \left(\Lie_{Z} \pi\right)^\sharp i_{X\wedge Y}\phi + \pi^\sharp  \left(i_{X\wedge Y }\Lie_Z \phi\right) -\pi^\sharp i_{\brr{Z, X\wedge Y}}\phi}\\
&& + \phi(\brr{X,Y}, Z, \pi^\sharp \al) + \mbox{c.p.}\\
&=&-\phi(X,Y, \pi^\sharp\d\eval{\al,Z}) - \phi(X,Y, \left(\Lie_{Z} \pi\right)^\sharp \al) -\Lie_Z \phi(X,Y, \pi^\sharp\al)\\
&&- \phi(X, \brr{Z,Y}, \pi^\sharp \al)   + \mbox{c.p.}.
\end{eqnarray*}

Since
\begin{align*}
\brr{\phi, \al}_{\pi}(X,Y,Z)
&=-\Lie_{\pi^\sharp(\al)} \phi(X,Y,Z) \\
&- \set{\phi (X,Y,\pi^\sharp\d \eval{\al,Z}) - \phi
(X,Y,\Lie_Z\pi^\sharp(\al)) + \mbox{c.p.}},
\end{align*}
 and  by hypothesis,   $\phi$ is closed, we finally have that
\[
(\d_N^{2}\al - \brr{\phi , \al}_\pi)(X,Y,Z)= -\d\phi (X,Y,Z,
\pi^\sharp \al)=0.
\]
\end{proof}

Suppose $(A,\pi,N,\phi)$ is a Poisson quasi-Nijenhuis Lie algebroid.
The double of the quasi-Lie bialgebroid $(A_\pi^*, \d_N, \phi)$ is a
Courant algebroid (see Example \ref{double:quasi:Lie:bialgebroid})
that we denote by $E_\pi^\phi$.

An interesting case is when the $3$-form $\phi$ is  the image by
$N^*$ of another closed $3$-form $\psi$:
\[
\phi=N^*\psi \quad \mbox{and} \quad \d\psi=0.
\]
In this case $(A, N\pi, \psi )$ is a twisted Poisson Lie algebroid
because
\[\brr{N\pi,N\pi}=2\pi^\sharp(\phi)=2\pi^\sharp(N^*\psi)=2N\pi^\sharp(\psi)\]
and $A^*$  has a structure  of Lie algebroid:
$A^{*\psi}_{N\pi}=(A^*, \br_{N\pi}^\psi, N\pi^\sharp )$ (see
Example \ref{ex:twisted:Poisson:structure}). Equipping $A$ with the differential $\d^\prime$ given by
\[
\d^\prime f=\d f, \quad \mbox{and} \quad
\d^\prime\al=\d\al-i_{N\pi^\sharp \al}\psi,
\]
  for  $f\in C^\infty(M)$ and $\al\in\Gamma(A^*)$,
we obtain a quasi-Lie bialgebroid: $(A^{*\psi }_{N\pi}, \d^\prime,
\psi)$. Its double is a Courant algebroid and we denote it by
$E_{N\pi}^\psi$.

\begin{thm}
Let $(A, \pi, N, \phi)$ be a Poisson quasi-Nijenhuis Lie algebroid
and suppose that $\phi=N^*\psi$, for some closed $3$-form $\psi$,
then
 \[
 F=\set{\left(a+N^*\al, Na+\al \right)| a\in A \mbox{ and } \al \in A^* } \subset E_{N\pi}^\psi\times \overline{E_\pi^\phi}
 \]
defines a Courant algebroid morphism between $E_{N\pi}^\psi$ and
$E_\pi^\phi$.
\end{thm}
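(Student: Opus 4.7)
My approach is to apply Theorem~\ref{thm:quasi:Courant:morph}, which asserts that any quasi-Lie bialgebroid morphism induces a Courant algebroid morphism of exactly the graph-type form stated. So the task essentially reduces to exhibiting a quasi-Lie bialgebroid morphism between $(A^{*\psi}_{N\pi},\d',\psi)$ and $(A^*_\pi,\d_N,\phi)$, whose underlying bundle map gives rise to the stated $F$.

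The natural candidate is the bundle map $N^\ast \colon A^\ast \to A^\ast$, regarded as a morphism between the Lie algebroids $A^{*\psi}_{N\pi}$ and $A^*_\pi$ (which share the underlying vector bundle $A^\ast$). I would verify the four conditions of Definition~\ref{def:quasi:morphism} in turn. The anchor compatibility at the Lie algebroid level is immediate from the $\pi$-compatibility of $N$, namely $N\pi^\sharp = \pi^\sharp N^\ast$, which gives $\rho_A\circ\pi^\sharp\circ N^\ast = \rho_A\circ(N\pi)^\sharp$. The bracket compatibility,
\[
N^\ast[\al,\be]_{N\pi}^{\psi}=[N^\ast\al,N^\ast\be]_\pi,
\]
is the heart of the argument. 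It follows by unpacking the twisted Koszul bracket: the Magri--Morosi concomitant $\C(\pi,N)=0$ supplies the classical Poisson--Nijenhuis part of the identity, while the $\psi$-twist appearing in $[\cdot,\cdot]_{N\pi}^\psi$ is absorbed via the hypothesis $\phi=N^\ast\psi$ against the contribution coming from the torsion identity $\T_N(X,Y)=-\pi^\sharp(i_{X\wedge Y}\phi)$.

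The dual compatibility (condition~2) and the anchor compatibility on the dual side (condition~3) are essentially dual reformulations of the same calculation, carried out for the transposed map $N\colon A\to A$ acting between the dual quasi-Lie algebroid structures induced by $\d_N$ and $\d'$. Condition~4, $N^\ast\psi=\phi$, is precisely the hypothesis $\phi=N^\ast\psi$. With $(N^\ast,\mathrm{id}_M)$ now recognized as a quasi-Lie bialgebroid morphism from $(A^{*\psi}_{N\pi},\d',\psi)$ to $(A^*_\pi,\d_N,\phi)$, Theorem~\ref{thm:quasi:Courant:morph} directly produces a Dirac structure in $E_{N\pi}^\psi\times\overline{E_\pi^\phi}$ supported on $\Delta M$, which after the natural reparametrisation is exactly the stated $F$.

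The principal obstacle is the bracket-compatibility step for $N^\ast$, where one must carefully combine three pieces of data: the vanishing of the Magri--Morosi concomitant, the quasi-Nijenhuis torsion formula, and the twist-removal identity $\phi=N^\ast\psi$. It is precisely the extra hypothesis $\phi=N^\ast\psi$ that makes the cancellation possible, explaining why the construction of the Courant morphism is restricted to this particular class of Poisson quasi-Nijenhuis Lie algebroids rather than the general case handled in Theorem~\ref{theor:quasi:Lie:bialg}.
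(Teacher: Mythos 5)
Your proposal follows the paper's proof essentially verbatim: the paper likewise shows that $N^*\colon A^{*\psi}_{N\pi}\to A^*_\pi$ is a quasi-Lie bialgebroid morphism (checking anchor, bracket, dual-bracket and $N^*\psi=\phi$ compatibility) and then invokes Theorem~\ref{thm:quasi:Courant:morph}. The only difference is one of presentation: the paper isolates your ``heart of the argument'' in an explicit lemma, $\eval{\T_{N^*}(\al,\be),X}=\phi(\pi^\sharp\al,\pi^\sharp\be,X)$, which is precisely the combination of the vanishing concomitant, the torsion formula and the hypothesis $\phi=N^*\psi$ that you describe.
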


In order to prove the theorem, we need to remark the following
property.

\begin{lem}
Let $(A, \pi, N, \phi)$ be a Poisson quasi-Nijenhuis Lie algebroid,
then
\[
\eval{\T_{N^*}(\al,\be),X}=\phi(\pi^\sharp\al, \pi^\sharp\be, X),
\]
for all $X\in\Gamma(A)$ and  $\al,\be\in\Gamma(A^*)$.
\end{lem}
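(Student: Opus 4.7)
The plan is a direct calculation. First, invoke the vanishing of the Magri--Morosi concomitant to rewrite $[\alpha,\beta]_\pi^{N^*}$ as the Koszul bracket $[\alpha,\beta]_{N\pi}$ of the bivector $N\pi$, which turns the torsion into
\[
\T_{N^*}(\alpha,\beta)=[N^*\alpha,N^*\beta]_\pi - N^*[\alpha,\beta]_{N\pi}.
\]
Pair both sides with $X\in\Gamma(A)$ and expand each Koszul bracket via $[\mu,\nu]_\sigma = \Lie_{\sigma^\sharp\mu}\nu - \Lie_{\sigma^\sharp\nu}\mu - \d\sigma(\mu,\nu)$, then apply the compatibility $N\pi^\sharp=\pi^\sharp N^*$ (and $(N\pi)^\sharp = N\pi^\sharp$) throughout.

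In the resulting difference, the four ``anchor-along-$N\pi^\sharp$'' Lie-derivative terms cancel between the two pieces. What remains organizes into the antisymmetric combination
\[
\beta\bigl([N\pi^\sharp\alpha,NX]-N[N\pi^\sharp\alpha,X]\bigr)-\alpha\bigl([N\pi^\sharp\beta,NX]-N[N\pi^\sharp\beta,X]\bigr)
\]
plus the anchor remainders $-\rho(X)\pi(N^*\alpha,N^*\beta)+\rho(NX)\pi(N^*\alpha,\beta)$. Now apply the algebraic identity
\[
[NY,NZ]-N[NY,Z]=\T_N(Y,Z)+N[Y,NZ]-N^2[Y,Z],
\]
which follows at once from the definition of $\T_N$, taking $Y=\pi^\sharp\alpha$, $Z=X$ (and the analogue obtained by $\alpha\leftrightarrow\beta$). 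Inserting the hypothesis $\T_N(\pi^\sharp\alpha,X)=-\pi^\sharp\phi(\pi^\sharp\alpha,X,-)$ and using the paper's pairing identity $\beta(\pi^\sharp\eta)=-\eta(\pi^\sharp\beta)$ together with the antisymmetry of $\phi$, each torsion contribution produces the desired value $\phi(\pi^\sharp\alpha,\pi^\sharp\beta,X)$.

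The main obstacle is bookkeeping: one must check that the residual terms --- the nested-$N$ brackets $N[\pi^\sharp\alpha,NX]$ and $N^2[\pi^\sharp\alpha,X]$ (together with their $\beta$-analogues), along with the two anchor remainders --- mutually cancel. This cancellation is purely algebraic, using Jacobi for $[\,\cdot\,,\,\cdot\,]_A$, the antisymmetry of $\pi$, and the compatibility $N\pi^\sharp=\pi^\sharp N^*$; the computation is lengthy but mechanical. A useful sanity check along the way is the cleaner identity $\pi^\sharp\T_{N^*}(\alpha,\beta)=\T_N(\pi^\sharp\alpha,\pi^\sharp\beta)$, which is automatic from $\pi^\sharp$ being a Lie algebroid morphism intertwining $N^*$ and $N$; this determines $\T_{N^*}(\alpha,\beta)$ modulo $\ker\pi^\sharp$, and the direct computation above fixes the remaining piece.
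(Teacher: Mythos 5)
Your first step is sound: the vanishing of the Magri--Morosi concomitant does let you replace $\brr{\al,\be}_\pi^{N^*}$ by $\brr{\al,\be}_{N\pi}$, and $(N\pi)^\sharp=N\pi^\sharp=\pi^\sharp N^*$ lets you push $N$ around. But from there the proof has a genuine gap: the entire content of the lemma sits in the cancellation of the ``residual'' terms, and you assert rather than perform it. Worse, the accounting you describe does not close. If the two torsion contributions $\eval{\be,\T_N(\pi^\sharp\al,X)}$ and $-\eval{\al,\T_N(\pi^\sharp\be,X)}$ \emph{each} produce the full value $\phi(\pi^\sharp\al,\pi^\sharp\be,X)$ after substituting $\T_N(Y,Z)=-\pi^\sharp(i_{Y\wedge Z}\phi)$, and everything else mutually cancels as you claim, then the total is $2\,\phi(\pi^\sharp\al,\pi^\sharp\be,X)$, twice the asserted answer. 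So either the residuals do not cancel to zero, or the terms do not organize into that symmetrized combination; either way the sketch as stated is internally inconsistent and cannot be accepted without the computation actually being done. Your ``sanity check'' $\pi^\sharp\T_{N^*}(\al,\be)=\T_N(\pi^\sharp\al,\pi^\sharp\be)$ is a correct consequence of compatibility, but as you note it only controls $\T_{N^*}$ modulo $\ker\pi^\sharp$, so it cannot rescue the argument.

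The paper's proof is much shorter and bypasses all of this bookkeeping: it invokes the identity $\eval{\T_{N^*}(\al,\be),X}=\eval{\al,\T_N(X,\pi^\sharp\be)}$, valid for any $N$ compatible with a Poisson bivector $\pi$ (this is the Kosmann-Schwarzbach--Magri result, cited as \cite{KosMagri}), and then simply substitutes the defining relation $\T_N(X,\pi^\sharp\be)=-\pi^\sharp\left(i_{X\wedge\pi^\sharp\be}\phi\right)$ and uses the antisymmetry of $\phi$. Note that in this identity only a \emph{single} evaluation of $\T_N$ appears on the right-hand side, which is further evidence that your antisymmetrized two-torsion-term normal form is not the correct reduction. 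If you want a self-contained argument, the efficient route is to prove the displayed transfer identity first (that is essentially the computation you are attempting, but with a clean target), and then the lemma follows in two lines.
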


\begin{proof}
The compatibility between $N$ and $\pi$ implies that (see
\cite{KosMagri})
\begin{align*}
\eval{\T_{N^*}(\al,\be),X}&=\eval{\al, \T_N(X, \pi^\sharp\be)},
\end{align*}
so
\begin{align*}
\eval{\T_{N^*}(\al,\be),X}&=\eval{\al, -\pi^\sharp\left(i_{X\wedge
\pi^\sharp\be}\phi\right)}=-\phi(X,\pi^\sharp\be,\pi^\sharp
\al)=\phi(\pi^\sharp \al,\pi^\sharp \be,X).
\end{align*}
\end{proof}

\begin{proof}[Proof of the Theorem]
First notice that $N^*: A^{*\psi}_{N\pi}\to A^*_{\pi}$ is a Lie
algebroid morphism because it is obviously compatible with the
anchors and
\begin{align*}
N^*\brr{\al,\be}_{N\pi}^\psi&=N^*\brr{\al, \be}_{N\pi} + N^*\psi(\pi^\sharp \al, \pi^\sharp \be, - )\\
&= \brr{N^*\al, N^* \be}_\pi - \T_{N^*}(\al,\be) + \phi(\pi^\sharp
\al, \pi^\sharp \be, -)= \brr{N^*\al, N^* \be}_\pi.
\end{align*}

Let $\br^\prime$ be the bracket on the sections of $A$ induced by the
differential $\d^\prime$. Notice that
$$\brr{X,f}^\prime=\eval{\d^\prime f, X}= \eval{\d f, X}, $$
so $\ds N^*\d^\prime f=\d_N f$, for all $f\in C^\infty(M)$ and $X\in
\Gamma(A)$.

And since
\[
\brr{X,Y}^\prime= \brr{X,Y} - (N\pi)^\sharp(\psi(X,Y,  - )),
\]
we have:
\begin{align*}
N\brr{X,Y}_N&= \brr{NX, NY} - \T_N(X,Y)= \brr{NX,NY} + \pi^\sharp(i_{X\wedge Y}N^*\psi)\\
&= \brr{NX,NY} + \psi (NX, NY, N\pi^\sharp - )= \brr{NX,NY}^\prime,
\end{align*}
for all $X,Y\in \Gamma(A)$.

This way we conclude that  $N^*:A^{*\psi}_{N\pi}\to A^*_{\pi}$ is a
quasi-Lie bialgebroid morphism (see definition
\ref{def:quasi:morphism}) and the result follows from Theorem
\ref{thm:quasi:Courant:morph}.
\end{proof}

\begin{rem} As a trivial particular case, consider $A$ a Lie algebroid and $\psi$  a closed 3-form. We have that
 $(A^*, \d, \psi)$ is quasi-Lie bialgebroid (see Example \ref{ex:quasi:Lie:null:structure}). If $N:A\to A$ is a Nijenhuis operator, then $A_N=(A,\br_N, \rho\smc N)$ is a Lie algebroid and $N:A_N\to A$ is a  Lie algebroid morphism. So, $$\d_N N^*\psi=N^* \d\psi=0$$ and $(A_N, \pi=0, N, N^*\psi)$ is a quasi-Nijenhuis Lie algebroid if $\d N^*\psi=0$.  Then
 $(A^*, \d_N, \phi=N^*\psi)$ is a quasi-Lie bialgebroid and $N^*:(A^*,\d, \psi)\to (A^*,\d_N, N^*\psi)$ is obviously a quasi-Lie bialgebroid morphism.

 \

 In fact, we can directly and immediately see that $(A^*, \d_N, N^*\psi)$ is a quasi-Lie bialgebroid (without thinking about quasi-Nijenhuis structures). This way we avoid the condition $\d N^*\psi$ needed to prove that   $(A_N, \pi=0, N, N^*\psi)$ is a quasi-Nijenhuis algebroid and $N^*:(A^*,\d, \psi)\to (A^*,\d_N, N^*\psi)$ is obviously still a quasi-Lie bialgebroid morphism.
\end{rem}

\section{Paired operators}  %


Let $(A,\d_{A^*},X_A)$ be a quasi-Lie bialgebroid over $M$ and
consider a bundle map  over the identity,  ${\mathcal N}: A \oplus
A^* \to A \oplus A^*$. This bundle map can be written in the matrix
form ${\mathcal N}= \left (
\begin{array}{cc}
 N & \pi  \\
 \sigma & N_{A^*}
 \end{array}
 \right )
 $
 with $N:A \to A$, $N_{A^*}: A^* \to A^*$, $\pi:A^* \to A$
 and $\sigma: A \to A^*$.

\begin{defn}
The operator ${\mathcal N}$ is called \emph{paired} if $$\langle
X+\al,\, {\mathcal N}(Y+ \be) \rangle + \langle {\mathcal
N}(X+\al),\, Y+ \be \rangle=0,$$ for all $X+\al, Y+\be \in A\oplus
A^*$, where $\langle \cdot, \cdot \rangle$ is the usual pairing
 on the double $A \oplus A^*$.
\end{defn}

As it is observed in \cite{CarGraMar}, ${\mathcal N}$ is paired if
and only if $\pi \in \Gamma(\wedge^{2} A)$, $\sigma \in
\Gamma(\wedge^{2} A^*)$ and $N_{A^*}= -N^*$.

\subsection{Paired operators on the double of Lie bialgebroids}

Let us now take the Lie bialgebroid $(A,A^*)$, where $A^*$ has the
null Lie algebroid structure. In this case, the double
 $A \oplus A^*$ is the standard Courant algebroid of Example \ref{{ex:Standard:Courant:algebroid}}.

Now we consider, on the sections of $A \oplus A^*$, the bracket
deformed by ${\mathcal N}$,
\begin{equation*}
\cbrr{X+\al, Y+\be}_{{\mathcal N}}=\cbrr{{\mathcal N}(X+\al),
Y+\be}+ \cbrr{X+\al, {\mathcal N}(Y+\be)}- {\mathcal N}\cbrr{X+\al,
Y+\be}
\end{equation*}
and the Courant-Nijenhuis torsion of ${\mathcal N}$,
\begin{equation*}\label{torsion}
\T_{\NN}(X+\al,Y+\be):=\cbrr{{\mathcal N}(X+\al),{\mathcal
N}(Y+\be)}- {\mathcal N}\cbrr{X+\al, Y+\be}_{{\mathcal N}}.
\end{equation*}

A simple computation shows that for all $\al,\be \in \Gamma(A^*)$,
$$\cbrr{\al, \be}_{\mathcal N}=[\al, \be]_{\pi}.
$$

\begin{prop} \label{A*Lie:algeb}
Let ${\mathcal N}$ be a paired operator on $A \oplus A^*$. If
$\T_{{\mathcal N}\,|A^*}=0$, then the vector bundle $A^*$ is
equipped with the Lie algebroid structure $A^*_\pi$.
\end{prop}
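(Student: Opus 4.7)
The plan is to exploit the already-noted identity $\cbrr{\al,\be}_{\mathcal{N}}=[\al,\be]_\pi$ for $\al,\be\in\Gamma(A^*)$ and then use the hypothesis $\T_{\mathcal{N}\,|A^*}=0$ only through its $A$-component, which will force the compatibility $\pi^\sharp[\al,\be]_\pi=[\pi^\sharp\al,\pi^\sharp\be]_A$. From this I would conclude that $\pi$ is Poisson and therefore $A^*_\pi=(A^*,[\,,\,]_\pi,\rho\smc\pi^\sharp)$ is a genuine Lie algebroid.

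First, I would recall that for any bivector $\pi\in\Gamma(\wedge^{2}A)$ the skew bracket
\[
[\al,\be]_\pi=\Lie_{\pi^\sharp\al}\be-\Lie_{\pi^\sharp\be}\al-\d\pi(\al,\be)
\]
is $\Rr$-bilinear and satisfies the Leibniz rule with respect to $\rho\smc\pi^\sharp$. Hence, to upgrade $A^*$ to the Lie algebroid $A^*_\pi$ it suffices to establish the Jacobi identity for $[\,,\,]_\pi$, which, by the standard Koszul/\cite{YKS} computation, is equivalent to $[\pi,\pi]_A=0$.

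Second, I would expand the hypothesis. Writing $\mathcal{N}\al=\pi^\sharp\al-N^*\al$ and using the standard Courant bracket from Example \ref{{ex:Standard:Courant:algebroid}}, the $A$-component of $\cbrr{\mathcal{N}\al,\mathcal{N}\be}$ receives contributions only from the $A$-parts $\pi^\sharp\al,\pi^\sharp\be$, because $A^*$ carries the null structure and the remaining terms of the Courant bracket land in $\Gamma(A^*)$. Thus
\[
\pr_A\cbrr{\mathcal{N}\al,\mathcal{N}\be}=[\pi^\sharp\al,\pi^\sharp\be]_A,
\qquad
\pr_A\mathcal{N}[\al,\be]_\pi=\pi^\sharp[\al,\be]_\pi.
\]
Equating these via $\T_{\mathcal{N}}(\al,\be)=0$ yields the desired identity
\[
[\pi^\sharp\al,\pi^\sharp\be]_A=\pi^\sharp[\al,\be]_\pi,\qquad \forall\,\al,\be\in\Gamma(A^*).
\]

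Finally, I would invoke the well-known fact that this compatibility, holding for all $\al,\be$, is equivalent to $[\pi,\pi]_A=0$; indeed, the discrepancy $[\pi^\sharp\al,\pi^\sharp\be]_A-\pi^\sharp[\al,\be]_\pi$ is, up to a universal factor, $[\pi,\pi]_A(\al,\be,-)$. Hence $\pi$ is a Poisson bivector and $A^*_\pi$ is a Lie algebroid. The main obstacle I expect is the last step: convincing oneself (or the reader) that only the $A$-component of the torsion condition is needed, and then invoking the correct Koszul-type identity relating $[\pi,\pi]_A$ to the defect of $\pi^\sharp$ being a bracket morphism. Notably, the $A^*$-component of $\T_{\mathcal{N}}(\al,\be)=0$ will translate into an extra condition tangled up with $N$, but it plays no role in the statement and can be ignored for this proposition.
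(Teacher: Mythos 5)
Your argument is correct and follows essentially the same route as the paper: the hypothesis $\T_{\mathcal N\,|A^*}=0$ is used only to extract the identity $[\pi^\sharp\al,\pi^\sharp\be]=\pi^\sharp[\al,\be]_\pi$, which is equivalent to $[\pi,\pi]=0$, so that $A^*_\pi$ is a Lie algebroid. You merely make explicit the ``straightforward computation'' the paper leaves implicit, by isolating the $A$-component of the torsion; this is a faithful expansion, not a different proof.
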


\begin{proof}
A straightforward computation shows that $$\T_{\mathcal N}(\al,
\be)=0 \,\, \Rightarrow \,\,  [\pi ^\# \al, \pi ^\# \be]= \pi^\#
[\al,\be]_{\pi},$$ for all sections $\al$ and $\be $ of $A^*$. This
means that $\pi$ is a Poisson bivector on $A$ and the result
follows.
\end{proof}

Now we give sufficient conditions for a paired operator to define a
Poisson quasi-Nijenhuis structure on a Lie algebroid.

\begin{thm}\label{teo:Poisson:q-N:alg}
Let ${\mathcal N}= \left (
\begin{array}{cc}
 N & \pi  \\
 \sigma & -N^*
 \end{array}
 \right )
 $  be a paired operator on $A\oplus A^*$ such that
 \[N \pi^\# =\pi ^\# N^*\quad
\mbox{and} \quad  i_{NX} \sigma = N^*( i_X\sigma),\quad  \forall X
\in \Gamma(A).\] If $\T_{{\mathcal N}\,|A^*}=0$ and $\T_{{\mathcal
N}\,|A}=0$, then $(A,\pi,N, \d\sigma)$ is a Poisson quasi-Nijenhuis
Lie algebroid.
\end{thm}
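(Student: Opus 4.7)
The plan is to decompose each of the two torsion hypotheses $\T_{\NN|A}=0$ and $\T_{\NN|A^*}=0$ into their $A$- and $A^*$-components using the matrix form of $\NN$, and to read off the four defining conditions of a Poisson quasi-Nijenhuis Lie algebroid one at a time. The off-diagonal entries $\pi^\sharp$ and $\sigma$ of $\NN$ are what mix the two summands, while the symmetry hypotheses $N\pi^\sharp = \pi^\sharp N^*$ and $i_{NX}\sigma = N^*(i_X\sigma)$ are what make the cross-terms assemble correctly.

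First I would apply Proposition \ref{A*Lie:algeb} directly: from $\T_{\NN|A^*}=0$ it follows that $\pi$ is a Poisson bivector, so $A^*_\pi$ carries a Lie algebroid structure, while $N\pi^\sharp = \pi^\sharp N^*$ is part of the hypotheses. The remaining half of the compatibility of $N$ with $\pi$, namely the vanishing of the Magri--Morosi concomitant $\C(\pi,N)=0$, should emerge from the $A^*$-component of $\T_\NN(\al,\be)=0$ once the $A$-component has yielded the Poisson condition used in the proposition; this is the natural dual counterpart of what that proposition already exploits.

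Next I would expand $\T_\NN(X,Y)$ for $X,Y\in\Gamma(A)$. Writing $\NN(X)=NX+i_X\sigma$ and using the standard Courant bracket on $A\oplus A^*$, the term $\cbrr{\NN X,\NN Y}$ produces an $A$-part $[NX,NY]$ plus an $A^*$-part built from $\Lie_{NX}(i_Y\sigma)-\Lie_{NY}(i_X\sigma)+\tfrac12 \d(\sigma(NY,X)-\sigma(NX,Y))$. The term $\NN\cbrr{X,Y}_\NN$ then applies $\NN$ to a sum of $A$- and $A^*$-pieces, so its entry $\pi^\sharp$ pulls any $A^*$-contribution down into $A$. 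Matching $A$-components of $\T_\NN(X,Y)=0$ should give precisely the Nijenhuis-type identity $\T_N(X,Y) = -\pi^\sharp(i_{X\wedge Y}\d\sigma)$, once the cyclic sum in the Courant bracket is rewritten via Cartan's magic formula and the identity $i_{NX}\sigma=N^*(i_X\sigma)$ is used to trade a $\sigma$ inserted on $N\cdot$ for one on $\cdot$ composed with $N^*$.

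For the remaining two axioms on $\phi=\d\sigma$, closedness $\d\phi=0$ is immediate from $\d^2=0$, while $\d(i_N\phi)=\d(i_N \d\sigma)=0$ is the more delicate ingredient; I expect this to be precisely what the $A^*$-component of $\T_\NN(X,Y)=0$ encodes, using $i_{NX}\sigma=N^*(i_X\sigma)$ to recognise the resulting 4-form as $\d i_N \d\sigma$. The main obstacle in the whole proof is exactly this bookkeeping step: one must carefully separate $\cbrr{\NN(X),\NN(Y)}-\NN\cbrr{X,Y}_\NN$ according to which summand of $A\oplus A^*$ each term lands in, and verify that what remains in $\Gamma(A^*)$ coincides, on each triple of arguments, with the correct multiple of $\d i_N \d\sigma$ rather than some other 4-form built from $\sigma$, $N$ and the Lie bracket of $A$.
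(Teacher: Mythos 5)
Your strategy is exactly the paper's: split $\T_{\NN\,|A^*}=0$ and $\T_{\NN\,|A}=0$ into $A$- and $A^*$-components and read off, in turn, the Poisson condition via Proposition \ref{A*Lie:algeb}, the compatibility of $N$ and $\pi$ (the paper phrases it as $[\al,\be]_{\pi}^{N^*}=[\al,\be]_{N\pi}$), the torsion identity $\T_N(X,Y)=\pi^\#(\d\sigma(X,Y,-))$, and finally the last axiom on $\phi=\d\sigma$. The one place where your description goes astray is precisely the step you flag as the crux. The $A^*$-component of $\T_{\NN}(X,Y)$ is a $1$-form, so setting it to zero yields an identity in the three arguments $X,Y,Z$; it cannot ``coincide on each triple of arguments with a multiple of $\d\, i_N\d\sigma$'', which is a $4$-form. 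What the computation actually produces is the $3$-form identity $\d(N\sigma)=i_N\d\sigma$, where $N\sigma(X,Y):=\sigma(NX,Y)$ is a well-defined $2$-form exactly because of the hypothesis $i_{NX}\sigma=N^*(i_X\sigma)$. The required condition $\d(i_N\d\sigma)=0$ is then obtained by applying $\d$ to both sides and using $\d^2=0$; it is a consequence of, not literally a component of, the vanishing torsion. With that one extra step inserted, your argument matches the paper's proof.
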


\begin{proof}
First, notice that the condition $i_{NX} \sigma = N^*( i_X\sigma)$
means that
$$\sigma(NX,Y)=\sigma(X,NY), \quad \forall X,Y \in \Gamma(A),$$ and
implies that $N \sigma$ defined by $N \sigma(X,Y)= \sigma(NX,Y)$ is
a $2$-form on $M$. The condition $N \pi^\# =\pi ^\# N^*$ ensures
that $N \pi$ is a bivector field on $A$.

For all $\al,\be \in \Gamma(A^*)$,
$$\T_{\mathcal N}(\al,\be)=0 \quad \mbox{\em iff} \quad \pi \quad \mbox{is a Poisson bivector} \quad \mbox{and}
 \quad [\al, \be]_{\pi}^{N^*}=[\al, \be]_{N\pi}.$$
So we have that $\pi$ and $N$ are compatible. On the other hand, if
$X$ and $Y$ are sections of $A$, then
$$\T_{\mathcal N}(X,Y)=0 \quad \mbox{\em iff} \quad \T_N(X,Y)=
\pi^\#(\d \sigma(X, Y, -)) \quad \mbox{{\rm and}} \quad \d(N
\sigma)= i_N \d \sigma.$$ According to Definition
\ref{def:quasiNijLiealg}, $(A, \pi, N, \d \sigma)$ is a Poisson
quasi-Nijenhuis Lie algebroid.
\end{proof}
From Theorem \ref{theor:quasi:Lie:bialg}, we obtain:

\begin{cor} \label{cor:Poisson:q-N:alg}
$(A^*_{\pi},\d_N, \d \sigma)$ is a quasi-Lie bialgebroid.
\end{cor}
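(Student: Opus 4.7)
The plan is to derive the corollary by directly composing the two previous results in this section. Theorem \ref{teo:Poisson:q-N:alg}, under the stated hypotheses on the paired operator $\mathcal{N}$, already establishes that $(A,\pi,N,\d\sigma)$ is a Poisson quasi-Nijenhuis Lie algebroid. Note that the closedness requirement on the 3-form in Definition \ref{def:quasiNijLiealg} is automatic here since $\d\sigma$ is exact; the remaining axioms (compatibility of $\pi$ with $N$, the torsion identity $\T_N(X,Y)=\pi^\sharp(\d\sigma(X,Y,-))$, and the closedness of $i_N\d\sigma$) are precisely the conclusions extracted from $\T_{\mathcal{N}\,|A^*}=0$ and $\T_{\mathcal{N}\,|A}=0$ in the proof of that theorem.

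Once this quasi-Nijenhuis structure is in hand, I would invoke Theorem \ref{theor:quasi:Lie:bialg} with the choice $\phi=\d\sigma$, which outputs exactly that $(A^*_\pi,\d_N,\d\sigma)$ is a quasi-Lie bialgebroid. This is the statement of the corollary, so nothing more needs to be proved.

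Since both ingredient theorems have already been verified in full, the corollary is genuinely a one-line consequence and there is no substantive obstacle. The only bookkeeping point worth checking before writing the proof is that the sign and ordering conventions used in the torsion identity produced by Theorem \ref{teo:Poisson:q-N:alg} match those assumed in Definition \ref{def:quasiNijLiealg}, but these are consistent internally to the paper. Accordingly, the proof will be a two-sentence citation: apply Theorem \ref{teo:Poisson:q-N:alg} to obtain the Poisson quasi-Nijenhuis structure, then apply Theorem \ref{theor:quasi:Lie:bialg} to conclude.
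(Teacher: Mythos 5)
Your proposal matches the paper exactly: the corollary is stated right after Theorem \ref{teo:Poisson:q-N:alg} with the phrase ``From Theorem \ref{theor:quasi:Lie:bialg}, we obtain,'' i.e.\ the paper's proof is precisely the composition of Theorem \ref{teo:Poisson:q-N:alg} (giving the Poisson quasi-Nijenhuis structure $(A,\pi,N,\d\sigma)$) with Theorem \ref{theor:quasi:Lie:bialg} applied to $\phi=\d\sigma$. Nothing further is needed.
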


\begin{rem} \label{rem:N:square}
We note that a paired operator ${\mathcal N}$ that satisfies
${\mathcal N}^{2} = -\textrm{Id}_{A \oplus A^*}$, also satisfies
$$\langle {\mathcal N}(X+\al),\, {\mathcal N}(Y+ \be) \rangle =
\langle X+\al, Y+ \be \rangle, \quad \forall X+\al, Y+\be \in
\Gamma(A \oplus A^*).$$ In this case ${\mathcal N}$ defines a
\emph{generalized complex structure on the Lie algebroid} $A$.
From ${\mathcal N}^{2} = -\textrm{Id}_{A \oplus A^*}$ we deduce
that $N \pi^\# =\pi ^\# N^*$, $N^{2} X + \pi^\#(i_X \sigma)= -X$
and $i_{NX} \sigma = N^*( i_X\sigma),$ with $X \in \Gamma(A)$.
\end{rem}

Let us denote by $(A\oplus A^*)_{{\mathcal N}}$ the vector bundle
map equipped with the nondegenerate symmetric bilinear form $\langle
\cdot, \cdot \rangle_{{\mathcal N}}$ given by \[\langle X+\al, Y+\be
\rangle_{{\mathcal N}}=  \langle {\mathcal N}(X+\al), {\mathcal
N}(Y+\be) \rangle,\] the bundle map
 $\rho_{{\mathcal N}}$ given by $\rho_{{\mathcal N}}(X+\al)= a(NX)+\pi^\#(\al)$ and
the bracket $\cbrr{\,\, ,\, }_{{\mathcal N}}$  on its space of sections.

We can now establish a result that generalizes the one of
\cite{StiXu}, for the case where the Lie algebroid $A$ is $TM$.

\begin{thm} \label{deformed:double:quasiLie}
Let ${\mathcal N}= \left (
\begin{array}{cc}
 N & \pi  \\
 \sigma & -N^*
 \end{array}
 \right )
 $ be a paired operator on $A \oplus A^*$ such that ${\mathcal N}^{2} = -\textrm{Id}_{A \oplus
 A^*}$. If
$\T_{\NN\,|A^*}=0$ and $\T_{\NN\,|A}=0$, then $(A\oplus
A^*)_{{\mathcal N}}$ is a Courant algebroid and it is identified
with the double of the quasi-Lie bialgebroid $(A^*_{\pi}, \d_N, \d
\sigma)$.
\end{thm}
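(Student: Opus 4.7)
The plan is to realize $(A\oplus A^*)_{\mathcal{N}}$ as the double of the quasi-Lie bialgebroid $(A^*_\pi,\d_N,\d\sigma)$ produced by Corollary \ref{cor:Poisson:q-N:alg}; once this identification is in place, the Courant algebroid axioms come for free from Example \ref{double:quasi:Lie:bialgebroid}. By Remark \ref{rem:N:square}, the hypotheses that $\mathcal{N}$ is paired and satisfies $\mathcal{N}^2=-\textrm{Id}$ automatically force the compatibilities $N\pi^\#=\pi^\# N^*$ and $i_{NX}\sigma=N^*(i_X\sigma)$; together with $\T_{\mathcal{N}|A^*}=0$ and $\T_{\mathcal{N}|A}=0$, these are exactly the hypotheses of Theorem \ref{teo:Poisson:q-N:alg}. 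Hence $(A,\pi,N,\d\sigma)$ is a Poisson quasi-Nijenhuis Lie algebroid, and Corollary \ref{cor:Poisson:q-N:alg} yields the quasi-Lie bialgebroid whose double $E$ is a Courant algebroid.

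It remains to check that $E$ and $(A\oplus A^*)_{\mathcal{N}}$ agree as structures on the common underlying bundle $A\oplus A^*$. For the pairing this is a one-liner: since $\mathcal{N}$ is skew for $\langle\,,\,\rangle$ and $\mathcal{N}^2=-\textrm{Id}$, one has $\langle \mathcal{N} u,\mathcal{N} v\rangle=-\langle \mathcal{N}^2 u,v\rangle=\langle u,v\rangle$, so $\langle\,,\,\rangle_{\mathcal{N}}$ coincides with the canonical pairing on $E$. For the anchor, reading the two block components of $\rho\smc\mathcal{N}$ gives $\rho_A\smc N$ on $A$ and $\rho_A\smc\pi^\#$ on $A^*$, which are respectively the anchor of the quasi-Lie algebroid on $A$ determined by $\d_N$ and the anchor of $A^*_\pi$, as required in $E$.

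The substantive verification is the agreement of the two brackets, which by $C^\infty$-bilinearity and the Leibniz rule reduces to three cases on pure sections. The $A^*$-$A^*$ case is already recorded in the remark preceding Proposition \ref{A*Lie:algeb}: $\cbrr{\al,\be}_{\mathcal{N}}=[\al,\be]_\pi$, matching the Lie bracket of $A^*_\pi$ in $E$. For the $A$-$A$ case, expanding $\cbrr{X,Y}_{\mathcal{N}}=\cbrr{\mathcal{N}X,Y}+\cbrr{X,\mathcal{N}Y}-\mathcal{N}\cbrr{X,Y}$ inside the standard Courant bracket and applying the Cartan identities $\Lie_X=\d i_X+i_X\d$ and $\Lie_X i_Y-i_Y\Lie_X=i_{[X,Y]}$ reorganises the result as $\cbrr{X,Y}_{\mathcal{N}}=[X,Y]_N+i_Y i_X\,\d\sigma$, which is precisely the bracket of two sections of $(A^*_\pi)^*=A$ in $E$, i.e. the bracket on $A$ induced by $\d_N$ twisted by $\d\sigma$. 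The main obstacle is the mixed case $\cbrr{X,\al}_{\mathcal{N}}$: one has to expand $\mathcal{N}X=NX+i_X\sigma$ and $\mathcal{N}\al=\pi^\#\al-N^*\al$, use that brackets of two $1$-forms vanish in the standard Courant bracket, and then carefully match the resulting Lie-derivative and differential terms against the mixed-bracket formula from Example \ref{double:quasi:Lie:bialgebroid} applied to $(A^*_\pi,\d_N,\d\sigma)$, invoking both compatibility relations $N\pi^\#=\pi^\# N^*$ and $i_{NX}\sigma=N^*(i_X\sigma)$ to absorb the cross-terms. Once this mixed case is handled, $\cbrr{\,,\,}_{\mathcal{N}}$ coincides with the Courant bracket of $E$ throughout, and the identification of $(A\oplus A^*)_{\mathcal{N}}$ with the double of $(A^*_\pi,\d_N,\d\sigma)$ is complete.
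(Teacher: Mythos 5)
Your proposal is correct and follows essentially the same route as the paper: Remark \ref{rem:N:square} supplies the compatibility conditions, Theorem \ref{teo:Poisson:q-N:alg} and Corollary \ref{cor:Poisson:q-N:alg} produce the quasi-Lie bialgebroid $(A^*_\pi,\d_N,\d\sigma)$, and one then checks that its double's pairing, anchor and bracket coincide with $\langle\,,\,\rangle_{\mathcal N}$, $\rho_{\mathcal N}$ and $\cbrr{\,,\,}_{\mathcal N}$. The only difference is that you spell out the ``easy computation'' (the three bracket cases) that the paper leaves implicit, and your computations are sound.
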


\begin{proof}
From Corollary \ref{cor:Poisson:q-N:alg} and Remark
\ref{rem:N:square} we have a Courant algebroid  $E^{\d
\sigma}_{\pi}$ which is the double of the quasi-Lie bialgebroid
$(A^*_{\pi},\d_N, \d \sigma)$. An easy computation shows that the
bracket on $\Gamma(E^{\d \sigma}_{\pi})$ coincides with the bracket
$\cbrr{\,\, , \, }_{{\mathcal N}}$ on $\Gamma((A\oplus A^*)_{{\mathcal
N}})$, the anchor of $E^{\d \sigma}_{\pi}$ is $\rho_{{\mathcal N}}$
and the nondegenerate bilinear form on $E^{\d \sigma}_{\pi}$ is
exactly $\langle \cdot, \cdot \rangle_{{\mathcal N}}$.

\end{proof}

\subsection{Paired operators on the double of quasi-Lie bialgebroids}
Now we consider the quasi-Lie bialgebroid $(A^*,\d_A, \phi)$ of
Example \ref{ex:quasi:Lie:null:structure} and the Courant algebroid
structure on its double: the standard Courant bracket twisted by
$\phi$, $\cbrr{\,\, ,\, }^\phi$,  and the anchor $\rho_A$. Let
${\mathcal N}= \left (
\begin{array}{cc}
 N & \pi  \\
 \sigma & -N^*
 \end{array}
 \right )
 $ be a paired operator and consider the bracket on
 $\Gamma(A \oplus A^*)$ deformed by ${\mathcal N}$:

\begin{equation*}
\cbrr{X+\al, Y+\be}^{\phi}_{{\mathcal N}}=\cbrr{{\mathcal N}(X+\al),
Y+\be}^{\phi}+ \cbrr{X+\al, {\mathcal N}(Y+\be)}^{\phi}- {\mathcal
N}\cbrr{X+\al, Y+\be}^{\phi}.
\end{equation*}

The Theorem \ref{deformed:double:quasiLie} admits a direct extension
 for the case of quasi-Lie bialgebroids.

\begin{thm}
Let ${\mathcal N}$ be a paired operator on the double $A \oplus A^*$
of the quasi-Lie bialgebroid $(A^*,\d_A, \phi)$, such that
${\mathcal N}^{2}=-Id_{A \oplus A^*}$. If $\T_{\NN\,|A^*}=0$ and
$\T_{\NN\,|A}=0$, then $(A\oplus A^*)^{\phi}_{{\mathcal N}}=(A\oplus
A^*, \cbrr{\,\, , \,}^{\phi}_{{\mathcal N}}, \rho_{{\mathcal N}}, \langle
\cdot , \cdot \rangle_{{\mathcal N}})$ is a Courant algebroid and it
is identified with the double of the quasi-Lie bialgebroid
$(A^*_{\pi}, \d', \d \sigma + i_N \phi)$, where $\d'$ the
differential given by $\d'f=\d_N f$ and $\d'\al= \d_N \al -
i_{\pi^\#(\al)} \phi$, for $f \in \C^\infty(M)$ and $\al \in
\Gamma(A^*)$.
\end{thm}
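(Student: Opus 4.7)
The plan is to mirror the proof of Theorem \ref{deformed:double:quasiLie}, keeping careful track of the extra contributions from the twisting $3$-form $\phi$. Since ${\mathcal N}^2 = -\textrm{Id}$ forces exactly the same algebraic identities on $N$, $\pi$, $\sigma$ as in Remark \ref{rem:N:square} (namely $N\pi^\sharp = \pi^\sharp N^*$, $i_{NX}\sigma = N^*(i_X\sigma)$, and $N^2 X + \pi^\sharp(i_X\sigma) = -X$), the underlying bundle-theoretic data is identical to the untwisted case; only the Courant bracket being deformed carries an extra $\phi$-term.

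First I would verify that $\mathcal{T}_{\mathcal N}|_{A^*} = 0$ still forces $\pi$ to be Poisson on $A$, so that $A^*_\pi$ is a Lie algebroid. This holds because for $\alpha,\beta \in \Gamma(A^*)$ the twisted Courant bracket $\cbrr{\alpha,\beta}^\phi$ coincides with the untwisted one (the $\phi$-correction only couples two $A$-sections), so Proposition \ref{A*Lie:algeb} applies verbatim. Next, unpacking $\mathcal{T}_{\mathcal N}|_A = 0$ with the twisted bracket $\cbrr{X,Y}^\phi = \brr{X,Y}_A + \phi(X,Y,-)+\ldots$, a direct computation should yield the twisted quasi-Nijenhuis identity
\[
\mathcal{T}_N(X,Y) \;=\; -\pi^\sharp\bigl( i_{X\wedge Y}(\d\sigma + i_N\phi) \bigr),
\]
together with the closedness of $i_N(\d\sigma + i_N\phi)$ in $A$ after suitable regrouping.

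Then I would verify that $(A^*_\pi, \d', \d\sigma + i_N\phi)$ really is a quasi-Lie bialgebroid. The correction $\d' - \d_N = -i_{\pi^\sharp(-)}\phi$ on $\Gamma(A^*)$ is precisely the deformation that appears in Example \ref{ex:twisted:Poisson:structure}, so $\d'$ is automatically a degree-one derivation of the wedge. The remaining axioms $\d'(\d\sigma + i_N\phi) = 0$ and $(\d')^2 = \brr{\d\sigma + i_N\phi,-}_\pi$ should follow by combining the twisted Nijenhuis identity above, the assumed $\d\phi = 0$ (from the given quasi-Lie bialgebroid $(A^*,\d_A,\phi)$), and $N\pi^\sharp=\pi^\sharp N^*$; the argument is parallel to Theorem \ref{theor:quasi:Lie:bialg} but with extra $\phi$-terms on both sides that cancel by virtue of these relations.

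Finally I would compare brackets: expanding $\cbrr{X+\alpha, Y+\beta}^\phi_{\mathcal N}$ by the definition of the ${\mathcal N}$-deformation and collecting the resulting terms into $A$- and $A^*$-components, one should recognise the Courant bracket of the double of $(A^*_\pi, \d', \d\sigma + i_N\phi)$ from Example \ref{double:quasi:Lie:bialgebroid}. The pairing $\langle\cdot,\cdot\rangle_{\mathcal N}$ and the anchor $\rho_{\mathcal N}$ coincide with the canonical ones on the double exactly as in Theorem \ref{deformed:double:quasiLie}, being insensitive to $\phi$. The step I expect to be the hardest is the verification of $(\d')^2 = \brr{\d\sigma + i_N\phi,-}_\pi$: it mixes three-form contributions living on both $A$ and $A^*$ and requires coordinated use of $\d\phi=0$, the twisted Nijenhuis identity, and Cartan-calculus identities relating $\d$, $\d_N$, $i_N$, $\pi^\sharp$, so careful bookkeeping is essential. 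Once this is established, the final identification of brackets reduces to the same linear regrouping as in the untwisted case, with the extra $\phi$-terms on each side matching by construction.
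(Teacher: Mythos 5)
Your overall plan is reasonable in outline, but it contains a concrete error and leaves undone precisely the step that the paper's proof is structured to avoid. First, the identity you extract from $\T_{\NN\,|A}=0$ is not the right one: with the twisted bracket the condition reads
\[
\T_N(X,Y)=\pi^\sharp\bigl((\d\sigma+i_N\phi)(X,Y,-)\bigr)-N\pi^\sharp\bigl(\phi(X,Y,-)\bigr),
\]
together with a second equation relating $\d(N\sigma)$, $i_N\d\sigma$ and $\phi$ evaluated on $NX$, $NY$ and $N(-)$; the extra term $-N\pi^\sharp(\phi(X,Y,-))$ is missing from your proposed identity. Likewise, $\T_{\NN\,|A^*}=0$ does not reduce to Proposition \ref{A*Lie:algeb} verbatim: although the $A$-component still forces $\pi$ to be Poisson, the $A^*$-component now reads $\brr{\al,\be}_{N\pi}-\brr{\al,\be}_\pi^{N^*}=\phi(\pi^\sharp\al,\pi^\sharp\be,-)$, because the $\phi$-twist enters through ${\mathcal N}\al=\pi^\sharp\al-N^*\al$, whose $A$-part is nonzero. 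Hence the Magri--Morosi concomitant no longer vanishes, $(A,\pi,N,\d\sigma+i_N\phi)$ is \emph{not} a Poisson quasi-Nijenhuis Lie algebroid, and $\d_N$ is no longer a derivation of $\br_\pi$ (only $\d'$ is). Your plan to verify $(\d')^2=\brr{\d\sigma+i_N\phi,-}_\pi$ ``in parallel to Theorem \ref{theor:quasi:Lie:bialg}'' therefore cannot proceed as stated; this is exactly the computation you flag as hardest and do not carry out.

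The paper takes a structural route that bypasses this verification entirely: it invokes the Lie-algebroid generalization of the results of Vaisman and of Lindstr\"om--Minasian--Tomasiello--Zabzine to conclude that the four equations encoded in $\T_{\NN\,|A}=0$ and $\T_{\NN\,|A^*}=0$ are equivalent to the vanishing of the full Courant--Nijenhuis torsion of ${\mathcal N}$ with respect to $\cbrr{\,\, ,\,}^\phi$, whence $(A\oplus A^*)^\phi_{\mathcal N}$ is a Courant algebroid. It then observes that $A^*_\pi$ is a Dirac structure of this Courant algebroid with $A$ a transversal isotropic complement, and Roytenberg's theorem delivers the quasi-Lie bialgebroid $(A^*_\pi,\d',\d\sigma+i_N\phi)$ without ever checking $(\d')^2=\brr{\d\sigma+i_N\phi,-}_\pi$ by hand. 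To salvage your direct approach you would need to (i) correct the two torsion conditions as above and (ii) actually perform the derivation-property and $(\d')^2$ computations with the corrected identities; otherwise, the Dirac-plus-transversal-complement argument is the efficient path.
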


\begin{proof}
Let $\al, \be \in \Gamma(A^*)$ and $X,Y \in  \Gamma(A)$. Then,

\[
\T_{\mathcal N}(\al,\be)=0 \quad \mbox{\em iff} \quad \left\{
\begin{array}{ll}
 &\pi \quad \mbox{is a Poisson bivector}
 \\
\\ &[\al, \be]_{N\pi} - [\al, \be]_{\pi}^{N^*}=  \phi(\pi ^\#(\al),
\pi ^\#(\be), -) \end{array} \right.
 \]

 and
$\T_{\mathcal N}(X,Y)=0$ \emph{iff}

\[
\left\{
\begin{array}{ll} &\T_N(X,Y)=
\pi^\#((\d \sigma + i_N \phi)(X, Y, -))- N \pi^\#(\phi(X,Y,-))\\
\\

& \d(N \sigma)(X,Y,-) + \phi(X,Y,-)\\ &=
\phi(NX,NY,-)+\phi(NX,Y,N-)+ \phi(X,NY,N-) + (i_N \d \sigma)(X,Y,-).
\end{array}
\right.\]

 A straightforward
 generalization for Lie algebroids of the results presented in \cite{Vaisman} and in
 \cite{LinMTZab} in the case of a manifold, establishes that the four equations corresponding to
$\T_{\NN\,|A^*}=0$ and $\T_{\NN\,|A}=0$ are equivalent to the
vanishing of the Courant-Nijenhuis torsion of ${\mathcal N}$ with
respect to the bracket $\cbrr{\,\, ,\,  }^\phi$. Therefore, we have a
new Courant algebroid structure on the vector bundle $A \oplus A^*$,
$(A \oplus A^*)_{{\mathcal N}}^{\phi}=(A \oplus A^*, \cbrr{\,\,
,\,}^{\phi}_{{\mathcal N}}, \rho_{{\mathcal N}}, \langle \cdot ,
\cdot \rangle_{{\mathcal N}})$.

The restriction of the bracket $\cbrr{\, \, ,\,}^{\phi}_{{\mathcal N}}$
to the sections of $A^*$ is the bracket $[\, ,\, ]_{\pi}$ and since
$\T_{\NN\,|A^*}=0$, we have that $A^*_{\pi}$ is a Dirac structure of
the Courant algebroid $(A \oplus A^*)_{{\mathcal N}}^{\phi}$. On the
other hand, the restriction of the bracket $\cbrr{\,\,
,\,}^{\phi}_{{\mathcal N}}$ to the sections of $A$ gives
$$\cbrr{X ,Y}^{\phi}_{{\mathcal N}}= [X,Y]_N - \pi^\#( \phi(X,Y,-))
+ \d \sigma(X,Y,-) + i_N \phi(X,Y,-)$$  and the anchor
$\rho_{{\mathcal N}}$ restricted to $\Gamma(A)$  is $\rho_A \circ
N$. If we consider the bracket
$$[X,Y]'= [X,Y]_N - \pi^\#( \phi(X,Y,-))$$
on the sections of $A$ and the bundle map $\rho_A \circ N$, the
differential corresponding to this structure on $A$ is $\d'$ given
by,

$$\d'f =\d_N f \quad \mbox{and} \quad \d'\al= \d_N \al -
i_{\pi^\#(\al)} \phi,$$ with $f \in \C^\infty(M)$ and $\al \in
\Gamma(A^*)$.

The vector bundle $A$ is obviously a transversal isotropic
complement of $A^*$, so that $(A^*_\pi, \d', \d \sigma + i_N
\phi)$ is a quasi-Lie bialgebroid \cite{Roy}. Finally, a simple
computation shows that the double of this quasi-Lie bialgebroid is
naturally identified with the Courant algebroid  $(A \oplus
A^*)_{{\mathcal N}}^{\phi}$.
\end{proof}

\end{document}